\theoremstyle{definition}
\newtheorem{definition}{Definition}[section]
\newtheorem{example}[definition]{Example}
\newtheorem{remark}[definition]{Remark}
\theoremstyle{plain}
\newtheorem{lemma}[definition]{Lemma}
\newtheorem{proposition}[definition]{Proposition}
\newtheorem{theorem}[definition]{Theorem}
\newtheorem{corollary}[definition]{Corollary}
\newtheorem{conjecture}[definition]{Conjecture}
\newtheorem*{itheorem}{Theorem}
\DeclareMathOperator{\Rect}{\mathcal{R}\mathit{ect}}
\DeclareMathOperator{\End}{End}
\DeclareMathOperator{\Map}{Map}
\DeclareMathOperator{\id}{id}
\DeclareMathOperator{\Fin}{\mathbf{Fin}}
\DeclareMathOperator{\Vect}{\mathbf{Vect}}
\DeclareMathOperator{\Ch}{\mathbf{Ch}}
\DeclareMathOperator{\Set}{\mathbf{Set}}
\DeclareMathOperator{\Ass}{\mathcal{A}\mathit{ss}}
\DeclareMathOperator{\uAss}{\mathit{u}\mathcal{A}\mathit{ss}}
\DeclareMathOperator{\Com}{\mathcal{C}\mathit{om}}
\DeclareMathOperator{\uCom}{\mathit{u}\mathcal{C}\mathit{om}}
\renewcommand{\Box}{\mathbin{\square}}
\DeclareMathAlphabet{\pazocal}{OMS}{zplm}{m}{n}
\def\calC{\pazocal{C}}
\def\calF{\pazocal{F}}
\def\calG{\pazocal{G}}
\def\calH{\pazocal{H}}
\def\calI{\pazocal{I}}
\def\calM{\pazocal{M}}
\def\calO{\pazocal{O}}
\def\calP{\pazocal{P}}
\def\calQ{\pazocal{Q}}
\def\calR{\pazocal{R}}
\def\calS{\pazocal{S}}
\def\calT{\pazocal{T}}
\def\calV{\pazocal{V}}
\def\calW{\pazocal{W}}
\def\calX{\pazocal{X}}
\def\calY{\pazocal{Y}}
\def\calZ{\pazocal{Z}}
\DeclareMathAlphabet{\mathbbold}{U}{bbold}{m}{n}
\def\k{\mathbbold{k}}
\DeclareMathOperator\Smod{\mathbb{S}-mod}
\newcommand{\ac}{\scriptstyle \text{\rm !`}}
\def\l@section{\@tocline{1}{0pt}{1pc}{}{}}
\def\l@subsection{\@tocline{2}{0pt}{1pc}{4.6em}{}}
\def\l@subsubsection{\@tocline{3}{0pt}{1pc}{7.6em}{}}
\renewcommand{\tocsection}[3]{%
  \indentlabel{\@ifnotempty{#2}{\makebox[1.25em][l]{\ignorespaces#1#2.}}}#3}
\renewcommand{\tocsubsection}[3]{%
  \indentlabel{\@ifnotempty{#2}{\hspace*{1.25em}\makebox[2.00em][l]{\ignorespaces#1#2.}}}#3}
\renewcommand{\tocsubsubsection}[3]{%
  \indentlabel{\@ifnotempty{#2}{\hspace*{3.25em}\makebox[2.75em][l]{\ignorespaces#1#2.}}}#3}
\begin{document}

\title[Boardman--Vogt tensor products of absolutely free operads]{Boardman--Vogt tensor products\\ of absolutely free operads}

\author{Murray Bremner}

\address{Department of Mathematics and Statistics, University of Saskatchewan, Saskatoon, Canada}

\email{bremner@math.usask.ca}

\author{Vladimir Dotsenko}

\address{School of Mathematics, Trinity College Dublin, Ireland, and Departamento de Matem\'aticas, CINVESTAV-IPN,  Col. San Pedro Zacatenco, M\'exico, D.F., CP 07360, Mexico}

\email{vdots@maths.tcd.ie}

\thanks{The research of Murray Bremner was supported by a Discovery Grant from NSERC, 
the Natural Sciences and Engineering Research Council of Canada.}

\dedicatory{To the memory of Trevor Evans (1925--1991), the pioneer of interchange laws in universal algebra}

\subjclass[2010]{Primary 
18D50. 
Secondary 
05A15,
05E15,
18D05, 
18G10,
52C22}

\keywords{absolutely free operad, algebraic operad,
interchange law,
nonassociative algebra,
Boardman-Vogt tensor product of symmetric operads, operad of little $d$-rectangles,
rectangular partitions of the unit $d$-dimensional cube}

\begin{abstract}
We establish a combinatorial model for the Boardman--Vogt tensor product 
of several absolutely free operads, that is free symmetric operads that are also free as  
$\mathbb{S}$-modules. Our results imply that such a tensor product is always a
free $\mathbb{S}$-module, in contrast with the results of Kock and Bremner--Madariaga
on hidden commutativity for the Boardman--Vogt tensor square of the operad of non-unital 
associative algebras. 
\end{abstract}

\maketitle

\section*{Introduction}

\subsection*{Interchange law}
Consider two binary operations $\rightarrow$ and $\uparrow$ on the same set $X$.
These operations are said to satisfy the \emph{interchange law} if (for all $x_1,\ldots, x_4 \in X$) 
\begin{equation}
\label{eq:interchangelaw}
( x_1 \uparrow x_2 ) \rightarrow ( x_3 \uparrow x_4 ) = ( x_1 \rightarrow x_3 ) \uparrow ( x_2 \rightarrow x_4 ).
\end{equation}
Note that this relation does not require
$X$ to possess any extra structure, e.g. it is not required to be an Abelian group, or a vector space.
Unlike familiar relations like associativity, each term in this relation involves three operation symbols, so this
 is, in the language of algebraic operads, a cubic relation. 

\smallskip

Geometrically, the interchange law expresses 
the equivalence of the two sequences of bisections which partition a square into four equal squares:
  \begin{align*}
  &
  \begin{xy} 
  (0,-5 ) = "1"; 
  (0,5 ) = "2"; 
  (10,-5 ) = "3"; 
  (10,5 ) = "4"; 
  { \ar@{-} "1"; "2" }; 
  { \ar@{-} "3"; "4" }; 
  { \ar@{-} "1"; "3" }; 
  { \ar@{-} "2"; "4" };
 {(5,0)}*{1}
  \end{xy}
  \quad  \xrightarrow{\; x_1 \rightarrow x_2 \;} \quad
  \begin{xy}
  (0,-5 ) = "1"; 
  (0,5 ) = "2"; 
  (5,-5 ) = "3"; 
  (5,5 ) = "4"; 
  { \ar@{-} "1"; "2" }; 
  { \ar@{-} "3"; "4" }; 
  { \ar@{-} "1"; "3" }; 
  { \ar@{-} "2"; "4" }; 
 {(2.5,0)}*{1};
  (5,-5 ) = "5"; 
  (5,5 ) = "6"; 
  (10,-5 ) = "7"; 
  (10,5 ) = "8"; 
  { \ar@{-} "5"; "6" }; 
  { \ar@{-} "7"; "8" }; 
  { \ar@{-} "5"; "7" }; 
  { \ar@{-} "6"; "8" }; 
 {(7.5,0)}*{2}
  \end{xy}
  \quad  \xrightarrow{\; (x_1\uparrow x_2)\rightarrow x_3 \;} \quad
  \begin{xy}
  (0,-5 ) = "1"; 
  (0,5 ) = "2"; 
  (5,-5 ) = "3"; 
  (5,5 ) = "4"; 
  { \ar@{-} "1"; "2" }; 
  { \ar@{-} "3"; "4" }; 
  { \ar@{-} "1"; "3" }; 
  { \ar@{-} "2"; "4" }; 
  (5,-5 ) = "5"; 
  (5,5 ) = "6"; 
  (10,-5 ) = "7"; 
  (10,5 ) = "8"; 
  { \ar@{-} "5"; "6" }; 
  { \ar@{-} "7"; "8" }; 
  { \ar@{-} "5"; "7" }; 
  { \ar@{-} "6"; "8" }; 
  (0,0 ) = "9";
  (5,0 ) = "10";
  { \ar@{-} "9"; "10" };   
 {(2.5,-2.5)}*{1};
 {(2.5,2.5)}*{2};
 {(7.5,0)}*{3};
  \end{xy}
  \quad \xrightarrow{\,\,\, (x_1\uparrow x_2)\rightarrow (x_3\uparrow x_4) \,\,\,} \quad
  \begin{xy}
  (0,-5 ) = "1"; 
  (0,5 ) = "2"; 
  (5,-5 ) = "3"; 
  (5,5 ) = "4"; 
  { \ar@{-} "1"; "2" }; 
  { \ar@{-} "3"; "4" }; 
  { \ar@{-} "1"; "3" }; 
  { \ar@{-} "2"; "4" }; 
  (5,-5 ) = "5"; 
  (5,5 ) = "6"; 
  (10,-5 ) = "7"; 
  (10,5 ) = "8"; 
  { \ar@{-} "5"; "6" }; 
  { \ar@{-} "7"; "8" }; 
  { \ar@{-} "5"; "7" }; 
  { \ar@{-} "6"; "8" }; 
  (0,0 ) = "9";
  (5,0 ) = "10";
  (10,0 ) = "11";
  { \ar@{-} "9"; "10" };   
  { \ar@{-} "10"; "11" };   
 {(2.5,-2.5)}*{1};
 {(2.5,2.5)}*{2};
 {(7.5,-2.5)}*{3};
 {(7.5,2.5)}*{4};
  \end{xy}
  \\
  &
  \begin{xy} 
  (0,-5 ) = "1"; 
  (0,5) = "2"; 
  (10,-5) = "3"; 
  (10,5) = "4"; 
  { \ar@{-} "1"; "2" }; 
  { \ar@{-} "3"; "4" }; 
  { \ar@{-} "1"; "3" }; 
  { \ar@{-} "2"; "4" };
 {(5,0)}*{1};
  \end{xy}
  \quad \xrightarrow{\,\,\, x_1\uparrow x_2 \,\,\,} \quad
  \begin{xy}
  (0,-5) = "1"; 
  (0,0) = "2"; 
  (10,-5) = "3"; 
  (10,0 ) = "4"; 
  { \ar@{-} "1"; "2" }; 
  { \ar@{-} "3"; "4" }; 
  { \ar@{-} "1"; "3" }; 
  { \ar@{-} "2"; "4" }; 
  (0,0) = "5"; 
  (0,5) = "6"; 
  (10,0) = "7"; 
  (10,5) = "8"; 
  { \ar@{-} "5"; "6" }; 
  { \ar@{-} "7"; "8" }; 
  { \ar@{-} "5"; "7" }; 
  { \ar@{-} "6"; "8" }; 
 {(5,-2.5)}*{1};
 {(5,2.5)}*{2};
  \end{xy}
  \quad  \xrightarrow{\; x_1\uparrow (x_2\rightarrow x_4) \;} \quad
  \begin{xy}
  (0,-5) = "1"; 
  (0,0) = "2"; 
  (10,-5) = "3"; 
  (10,0) = "4"; 
  { \ar@{-} "1"; "2" }; 
  { \ar@{-} "3"; "4" }; 
  { \ar@{-} "1"; "3" }; 
  { \ar@{-} "2"; "4" }; 
  (0,0) = "5"; 
  (0,5) = "6"; 
  (10,0) = "7"; 
  (10,5) = "8"; 
  { \ar@{-} "5"; "6" }; 
  { \ar@{-} "7"; "8" }; 
  { \ar@{-} "5"; "7" }; 
  { \ar@{-} "6"; "8" }; 
  (5,5) = "9"; 
  (5,0) = "10"; 
  { \ar@{-} "9"; "10" };   
 {(5,-2.5)}*{1};
 {(2.5,2.5)}*{2};
 {(7.5,2.5)}*{4};
  \end{xy}
  \quad\xrightarrow{\; (x_1\rightarrow x_3)\uparrow (x_2\rightarrow x_4) \;} \quad
  \begin{xy}
  (0,-5) = "1"; 
  (0,0) = "2"; 
  (10,-5) = "3"; 
  (10,0) = "4"; 
  { \ar@{-} "1"; "2" }; 
  { \ar@{-} "3"; "4" }; 
  { \ar@{-} "1"; "3" }; 
  { \ar@{-} "2"; "4" }; 
  (0,0) = "5"; 
  (0,5) = "6"; 
  (10,0) = "7"; 
  (10,5) = "8"; 
  { \ar@{-} "5"; "6" }; 
  { \ar@{-} "7"; "8" }; 
  { \ar@{-} "5"; "7" }; 
  { \ar@{-} "6"; "8" }; 
  (5,5) = "9"; 
  (5,0) = "10"; 
  (5,-5) = "11"; 
  { \ar@{-} "9"; "10" };   
  { \ar@{-} "10"; "11" };   
 {(2.5,-2.5)}*{1};
 {(2.5,2.5)}*{2};
 {(7.5,-2.5)}*{3};
 {(7.5,2.5)}*{4};
  \end{xy}
  \end{align*}

\smallskip 

An important toy model of an interchange law is that between the two operations on PROPs of endomorphisms. 
Recall that the collection of sets
\[
\End_X =
 \left\{\End_X(p,q)\right\}_{p, q \ge 0} =
 \left\{\Map(X^p, X^q) \right\}_{p, q \ge 0} 
\]
is equipped with two associative operations:
\begin{itemize}[leftmargin=*]
\item[$-$]
The \emph{vertical composition}
from $\End_X(p,q) \times \End_X(q,r)$ to $\End_X(p,r)$
which is the composition of linear maps:
if $f\colon X^p \to X^q$
and $g\colon X^q \to X^r$ then
\[
f \uparrow g \colon X^p \xrightarrow{\; g \circ f \;} X^r.
\]
\item[$-$]
The \emph{horizontal composition}
from $\End_X(p,q) \times \End_X(r,s)$ to $\End_X(p+r,q+s)$
which is induced by the direct product of maps:
if $f \colon X^p \to X^q$ 
and $g \colon X^r \to X^s$ then
\[
f \rightarrow g \colon
X^{p+r} \cong X^p \times X^r
\xrightarrow{\; f \times g \;}
X^q \times X^s \cong X^{q+s}.
\]
\end{itemize}
These two operations are related by 
the interchange law \eqref{eq:interchangelaw}.

\smallskip

Two binary operations satisfying the interchange law seem to have first appeared explicitly in the mathematical literature in Godement's
``five rules of functorial calculus''~\cite[Appendix \S1, equation (V)]{Godement1958}. More generally, one can talk about interchange for operations of arbitrary arities. The corresponding definition appeared, independently, in work of Evans~\cite{Evans1962} and of Boardman and Vogt~\cite{BV1973}. The latter reference has become the definitive 
source on interchange of algebraic structures, encoding it under the name of Boardman--Vogt tensor product of operads;
its influence on algebraic topology and higher category theory is hard to overestimate. By contrast, the former reference 
remained mostly unnoticed (even by Mathematical Reviews). 

\subsection*{Geometry of interchange}

The geometric model of the interchange law for two binary operations that we mentioned above utilises subdivisions of the unit square into several pieces which are obtained by iterated bisections orthogonal to the coordinate axes. This geometric model admits a straightforward generalisation to $d$ dimensions. In this case the combinatorial objects of interest are subdivisions of the unit cube into $d$-dimensional rectangles with disjoint interiors by a sequence of bisections orthogonal to the coordinate axes. The $d$ interchanging binary operations are represented by bisections orthogonal to the $d$ coordinate hyperplanes. 

Let us remark that such subdivisions of the unit cube are subsets of the components of the operad of little $d$-cubes (or, more precisely, little $d$-rectangles), and in fact form a suboperad. However, these subsets are discrete, and therefore exhibit rigidity that renders the connection somewhat superficial; in particular, in the homology of the operad of little $d$-cubes the corresponding operad collapses into $\Com$, the operad of commutative associative algebras. It is also worth noting that the notion of a subdivision we are working with is different from the commonly considered partitions of the unit cube in the combinatorics literature; the closest but still different notion is that of the so called ``guillotine partitions'', or  ``slicing floorplans''; see the recent paper~\cite{ABMP2014}  of Asinowski, Barequet, Mansour and Pinter and references therein. 

The subdivisions of the unit $d$-cube have $d$ interchanging binary products, that is, the action of the Boardman--Vogt tensor product of $d$ copies of the absolutely free operad on one binary generator. There exists a similar geometric model for any $d$-fold Boardman--Vogt tensor product 
 \[
\calT(\calX_1)\otimes\cdots\otimes\calT(\calX_d)
 \] 
of several absolutely free operads. Namely, to encode a generator of arity $m>2$, we may also consider subdivisions into $m$ equal parts using $m-1$ parallel hyperplanes, and to have several generators for the same operad $\calT(\calX_i)$, we may assign to hyperplane cuts of the same directions labels which distinguish them from one another.
This leads to the general notion of $(\calX_1,\ldots,\calX_d)$-subdivisions of the unit cube, and to the structure of an operad on those subdivisions which we call the  \emph{cut operad}. 

\subsection*{Homological methods}

To establish that our geometric model encodes Boardman--Vogt tensor products faithfully (Theorem \ref{th:Iso}), it turns out to be crucial to move from combinatorics to homological algebra. In a way, all key results of this paper are connected through a conceptual result on right module resolutions. To state that result, recall that the category of $\mathbb{S}$-modules has a monoidal structure $\Box$, called the matrix product by Dwyer and Hess in~\cite{DH2014}, or the arithmetic product by Maia and M\'endez in~\cite{MM2008}, which categorifies the product of Dirichlet series. 

\begin{itheorem}[Th.~\ref{maintheorem}]
Let $\calT(\calX_1)$, \ldots, $\calT(\calX_d)$ be reduced connected absolutely free set operads. There exists a minimal resolution 
 \[
\big((\mathbb{I}\oplus\k\calX_1)\Box\cdots\Box(\mathbb{I}\oplus\k\calX_d)\big)\circ\big(\calT(\calX_1)\otimes\cdots\otimes\calT(\calX_d)\big)
 \]
of the augmentation module $\mathbb{I}$ over (the linearised version of) the $d$-fold Boardman--Vogt tensor product $\calT(\calX_1)\otimes\cdots\otimes\calT(\calX_d)$ by free right modules. Here the homological degree of all factors $\mathbb{I}$ is equal to zero, and the homological degree of $\calX_k$ is equal to $1$ for all $1\le k\le d$.
\end{itheorem}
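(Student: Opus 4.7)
The plan is to construct the resolution inductively on $d$, starting from $d=1$ and combining the resulting complexes via the arithmetic product $\Box$, exploiting a distributivity identity that relates $\Box$, the composition product $\circ$, and the Boardman--Vogt tensor product $\otimes$.

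For $d=1$, the absolute freeness of $\calT(\calX)$ gives a short exact sequence of right $\calT(\calX)$-modules $0 \to \k\calX \circ \calT(\calX) \to \calT(\calX) \to \mathbb{I} \to 0$, in which $\k\calX \circ \calT(\calX)$ is the free right module generated by $\k\calX$. Placing $\mathbb{I} \circ \calT(\calX) = \calT(\calX)$ in homological degree $0$ and $\k\calX \circ \calT(\calX)$ in homological degree $1$, with the differential sending a generator $x \in \calX$ to its image in the augmentation ideal, gives the required two-term free resolution $(\mathbb{I} \oplus \k\calX) \circ \calT(\calX)$. Minimality is immediate because the differential takes values inside the augmentation ideal of $\calT(\calX)$.

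The heart of the proof is the identification
\[
\big(M_1 \Box \cdots \Box M_d\big) \circ \big(\calP_1 \otimes \cdots \otimes \calP_d\big) \;\cong\; \big(M_1 \circ \calP_1\big) \Box \cdots \Box \big(M_d \circ \calP_d\big)
\]
of $\mathbb{S}$-modules, natural in the $M_i$ and compatible with the right $\calP_1 \otimes \cdots \otimes \calP_d$-module structures. I would establish this by unpacking definitions in terms of labelled species, using the fact that the arithmetic product $\Box$ is controlled by decompositions of index sets as Cartesian products, and that an element of the Boardman--Vogt tensor product can be modelled by a tree whose internal edges are coloured by the factors $1,\dots,d$, so that acting by a box-product of free modules amounts to acting factor-by-factor. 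Substituting the two-term resolutions from the $d=1$ case into each tensor slot on the right-hand side and running the identification backwards produces the complex in the statement. Since we are working over a field and $\Box$ acts arity-by-arity as a finite-dimensional tensor product, a Künneth argument shows that the homology of the $\Box$-product of acyclic resolutions is $\mathbb{I} \Box \cdots \Box \mathbb{I} \cong \mathbb{I}$ concentrated in homological degree $0$, so the complex is indeed a resolution of $\mathbb{I}$. Minimality is preserved because on each factor the differential lands in the augmentation ideal of the corresponding $\calT(\calX_i)$, hence in the augmentation ideal of the tensor product.

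The main obstacle I expect is the distributivity identity displayed above, together with its compatibility with the right module action and with the internal differentials of the two-term resolutions. In particular, one must verify that the interchange relations defining $\calT(\calX_1) \otimes \cdots \otimes \calT(\calX_d)$ match exactly the relations imposed by the $\Box$-structure when free substitution is performed in all $d$ slots simultaneously, and that no unexpected collapse occurs of the kind reported by Kock and Bremner--Madariaga for hidden commutativity. Once this structural identification is secured, the rest of the argument reduces to the single-operad case of Step one and a routine Künneth computation.
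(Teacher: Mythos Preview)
Your distributivity identity
\[
\big(M_1 \Box \cdots \Box M_d\big) \circ \big(\calP_1 \otimes \cdots \otimes \calP_d\big) \;\cong\; \big(M_1 \circ \calP_1\big) \Box \cdots \Box \big(M_d \circ \calP_d\big)
\]
is false, already in the case you need. Take $d=2$ and restrict to the summand $M_1=M_2=\mathbb{I}$: the left side becomes $\calT(\calX_1)\otimes\calT(\calX_2)$, while the right side becomes $\calT(\calX_1)\Box\calT(\calX_2)$. These are not isomorphic as $\mathbb{S}$-modules; for one binary generator in each factor, the left side has dimension $48$ in arity $3$ while the right side has dimension $24$ (the matrix product $\Box$ only sees the factorisations $3=1\cdot 3=3\cdot 1$). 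The correct identity, due to Dwyer and Hess and recalled in the paper as Proposition~\ref{prop:DHK}, has the right-module Boardman--Vogt tensor $\tilde{\otimes}$ on one side, not $\Box$; and as the paper explains in Section~\ref{sec:conclusion}, that result lives in simplicial sets and does not transport to the $\k$-linear setting in the way your K\"unneth argument would require. Example~\ref{ex:AssAss} shows that the shape of resolution you are aiming for fails for $\Ass\otimes\Ass$, so a purely formal argument from the $d=1$ resolutions cannot succeed without using something specific to absolutely free operads.

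The paper proceeds in the opposite logical order. It first builds a concrete combinatorial model, the cut operad $\calC^{(d)}_{\calX_\bullet}$ of labelled hyperplane subdivisions of the unit $d$-cube, which is \emph{a priori} only a quotient of $\calT(\calX_1)\otimes\cdots\otimes\calT(\calX_d)$. For this model one can write down the resolution $\calH^{(d)}_{\calX_\bullet}\circ\calC^{(d)}_{\calX_\bullet}$ by hand (Lemma~\ref{lm:CutResol}): basis elements are two-coloured subdivisions, the differential turns one ``black'' full cut into a ``white'' one, and acyclicity is proved by an explicit Koszul-type contracting homotopy on each subcomplex with fixed number of cut-through directions. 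Only then does the paper read off from homological degrees $1$ and $2$ of this minimal resolution that the generators and relations of $\calC^{(d)}_{\calX_\bullet}$ are exactly those of the Boardman--Vogt tensor product (Theorem~\ref{th:Iso}); this is where the absence of hidden collapse is actually established, rather than assumed. Your outline treats the no-collapse statement as an obstacle to be checked at the end, but it is the genuine content of the theorem, and the paper's route through the explicit geometric model is what supplies it.
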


\subsection*{Organisation of the paper} 

The paper is organised as follows. In Section \ref{sec:recoll}, we recall the key relevant definitions of the theory of operads. In Section \ref{sec:subdiv} we create, in three easy steps, a combinatorial set-up for modelling interchange, the $\calX_\bullet$-subdivisions of the unit $d$-cube. In Section \ref{sec:proofs}, we establish that $\calX_\bullet$-subdivisions encode the Boardman--Vogt product faithfully, in other words, that the cut operad determined by the datum $(\calX_1,\ldots,\calX_d)$ is isomorphic to the $d$-fold tensor product $\calT(\calX_1)\otimes\cdots\otimes\calT(\calX_d)$, and prove Theorem \ref{maintheorem} stated above. In Section \ref{sec:conclusion}, we discuss a possible generalisation of that theorem and its limitations.

\subsection*{Acknowledgements } We would like to thank Sara Madariaga for useful discussions at an early stage of work on this project.  The second author is grateful to Kathryn Hess for the reference \cite{DH2014} where the matrix product of collections is related to the Boardman--Vogt tensor product and especially for sharing her work in progress with William Dwyer and Ben Knudsen which gave us an \emph{a posteriori} intuitive explanation of our homological result.

\section{Recollections}\label{sec:recoll}

We refer the reader to the comprehensive monograph \cite{LV2012} by Loday and Vallette for background on algebraic operads, and only recall some of the notions of particular importance for this paper.

We denote by $\Fin$ the category of nonempty finite sets (with bijections as morphisms); we use the ``topologist's notation'' $\underline{n}=\{1,\ldots,n\}$. Underlying objects of all operads of these paper will be objects of one of the following three symmetric monoidal categories: the category $\Set$ of finite sets (with all maps as morphisms), the category $\Vect$ of finite-dimensional vector spaces (with all linear maps as morphisms), or the category $\Ch$ of nonnegatively graded chain complexes with finite-dimensional components (with all chain maps as morphisms). Denote one of those categories by $\mathcal{C}$. Recall that a \emph{($\mathcal{C}$-valued) symmetric collection} (or an \emph{$\mathbb{S}$-module}) is a contravariant functor from the category~$\Fin$ to $\mathcal{C}$. The category $\Smod$ of symmetric collections has symmetric collections as objects, and natural transformations of functors as morphisms. An immediate consequence of functoriality is that for every $\mathbb{S}$-module $\calF$ the object $\calF(\underline{n})$ acquires a right action of $S_n$, the group of automorphisms of $\underline{n}$ (which explains the terminology);  we denote by $v.\sigma$ the result of the action of $\sigma\in S_n$ on $v\in\calF(\underline{n})$.  We say that a symmetric collection $\calM$ is \emph{free} if for each $n$ the action of $S_n$ on $\calM(\underline{n})$ is free. 

\subsection{Composition of symmetric collections}

We begin by recalling one well known monoidal structure on $\Smod$. 

\begin{definition}
Let $\calP$ and $\calQ$ be two symmetric collections. The \emph{(symmetric) composition} $\calP\circ\calQ$ is defined by the formula
 \[
(\calP\circ\calQ)(X):=\bigsqcup_{k}\calP(\underline{k})\bigotimes_{S_k}\left(\bigsqcup_{f\colon 
X\twoheadrightarrow\underline{k}}\calQ(f^{-1}(1))\otimes\ldots\otimes\calQ(f^{-1}(k))\right),
 \]
where the sum is taken over all surjections~$f$.
\end{definition}

Recall that the \emph{unit collection} $\mathbb{I}$ is defined as follows:
 \[
\mathbb{I}(X)=
\begin{cases}
\mathbbold{1}, \quad  |X|=1,\\
0, \quad  |X|\ne 1,\end{cases}
 \]
where $0$ is the initial object of $\mathcal{C}$. It is well known that the operation $\circ$ makes $\Smod$ into a monoidal category with the unit object $\mathbb{I}$. Monoids in $(\Smod, \circ,\mathbb{I})$ are known as \emph{symmetric operads}. The structure map $\calO\circ\calO\to\calO$ is denoted by $\gamma_\calO$, or simply $\gamma$ where there is no ambiguity. A $\k$-linear symmetric operad $\calO$ is said to be \emph{augmented} if it is equipped with a morphism $\epsilon\colon\calO\to\mathbb{I}$ satisfying $\epsilon\eta=\id$, where $\eta\colon\mathbb{I}\to\calO$ is the unit of the monoid~$\calO$.  

Unless otherwise stated, all operads we work with are \emph{reduced} (that is, $\calO(\underline{0})=0$) and \emph{connected} (that is, $\calO(\underline{1})=\mathbbold{1}$). In the linear context, such operads are automatically augmented, with augmentation being the quotient by the ideal of elements of arity greater than one. 

We say that an operad is \emph{absolutely free} if it is generated by elements that possess no symmetries and satisfy no relations. In other words, an absolutely free operad is a free operad generated by a free symmetric collection.

\subsection{Matrix product of symmetric collections}

The next definition we recall here is much less known. It was first proposed by Maia and M\'endez in~\cite{MM2008} under the name ``arithmetic product'' in order to categorify the Dirichlet product of two sequences of numbers, and then rediscovered by Dwyer and Hess in~\cite{DH2014} under the name ``matrix monoidal structure''. We shall keep the latter name because we feel that it serves as a better illustration of the underlying combinatorics.

\begin{definition}
Let $\calX$ and $\calY$ be two symmetric collections. The \emph{matrix product} $\calX\Box\calY$ is defined by the formula
 \[
(\calX\Box\calY)(X):=\bigsqcup_{(\pi,\tau)}\calX(\pi)\otimes\calY(\tau) ,
 \]
where the sum is taken over all pairs of orthogonal set partitions 
 \[
\pi=\{\pi_1,\ldots,\pi_k\}, \quad \tau=\{\tau_1,\ldots,\tau_l\}
 \]
of $X$, so that $X=\pi_1\sqcup \cdots\sqcup \pi_k=\tau_1\sqcup\cdots\sqcup\tau_l$ and $|\pi_i\cap \tau_j|=1$ for all $i=1,\ldots,k$ and $j=1,\ldots,l$. 
\end{definition}

\smallskip 

It is known that the operation $\Box$ makes $\Smod$ into a monoidal category with the unit object $\mathbb{I}$, see~\cite{MM2008}. More amusingly (although not immediately important) for the purpose of this paper, Dwyer and Hess established in \cite[Prop.~1.20]{DH2014} that there exists a natural transformation
 \[
\sigma\colon (\calV\circ\calW)\Box(\calY\circ\calZ)\to(\calV\Box\calY)\circ(\calW\Box\calZ) , 
 \]
so the interchange law manifests itself once again!





\subsection{Boardman--Vogt tensor product of operads} 

The third monoidal structure that we define here is the monoidal structure on the category of symmetric set operads, introduced by Boardman and Vogt in~\cite{BV1973}, and extensively used in algebraic topology since then. Throughout this section, all operads are assumed to be operads in $\Set$. 

\begin{definition}
Let $\calP$ and $\calQ$ be two symmetric operads. The \emph{Boardman--Vogt tensor product} $\calP\otimes\calQ$ is defined by the formula
 \[
\calP\otimes\calQ = (\calP\sqcup\calQ) / \calI ,
 \]
where $\calI$ is the ideal in the coproduct (free product) of $\calP$ and $\calQ$ generated by all elements of $\calP\sqcup\calQ$ of the form
\begin{equation}\label{eq:IC}
\gamma(p;q,\ldots,q)-\gamma(q;p,\ldots,p).\sigma_{k,l} ,
\end{equation}
where $p\in\calP(\underline{k})$, and $q\in\calQ(\underline{l})$, and $\sigma\in S_{kl}$ which ``exchanges rows and columns'',
that is for each $1\le (i-1)l+j\le kl$ with $1\le i\le k$ and $1\le j\le l$, we have  
 \[
\sigma_{k,l}((i-1)l+j) = (j-1)k+i .
 \]
Algebras over the operad $\calP\otimes\calQ$ are called \emph{algebras with interchanging $\calP$- and $\calQ$-actions}.
\end{definition}

\smallskip 

The following rather obvious result on Boardman--Vogt tensor products is often useful. The closest reference for it that we could find is a particular case $\calP=\calQ$, see the work of Dunn~\cite[Prop.~1.6]{Dunn88}.

\begin{proposition}\label{prop:InterchangeGen}
Suppose that $\calP=\calT(\calX)/(\calR)$ and $\calQ=\calT(\calY)/(\calS)$ are presentations of the operads $\calP$ and $\calQ$ by generators and relations. Then 
 \[
\calP\otimes\calQ=\calT(\calX\sqcup\calY)/(\calR\sqcup\calS\sqcup\calI\calC),
 \]
where $\calI\calC$ are the relations $\gamma(x;y,\ldots,y)-\gamma(y;x,\ldots,x).\sigma_{k,l}$ where $x\in\calX(k)$ and $y\in\calY(l)$ are generators of $\calP$ and $\calQ$ respectively. In plain words, actions of two operads interchange if and only if the actions of their generators interchange.
\end{proposition}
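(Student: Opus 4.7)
The plan is to pass through universal properties. First I would invoke the standard fact that the coproduct of operads admits the presentation $\calP \sqcup \calQ \cong \calT(\calX \sqcup \calY)/(\calR \sqcup \calS)$; this holds because the free operad functor $\calT$ is a left adjoint and hence preserves coproducts, while quotienting by relations is a pushout. Under this identification, $\calP \otimes \calQ$ is the quotient of $\calT(\calX \sqcup \calY)/(\calR \sqcup \calS)$ by the ideal generated by \emph{all} interchange relations \eqref{eq:IC}, while the operad on the right-hand side of the proposition is the quotient by the ideal generated by the \emph{a priori} smaller set $\calI\calC$ of interchange relations between generators only. There is therefore a canonical surjection from the latter onto the former, and it suffices to prove that the full interchange relation $\gamma(p;q,\ldots,q) = \gamma(q;p,\ldots,p).\sigma_{k,l}$ already holds in $\calT(\calX \sqcup \calY)/(\calR \sqcup \calS \sqcup \calI\calC)$ for all $p \in \calP$ of arity $k$ and $q \in \calQ$ of arity $l$.

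I would prove this by induction on the total number of generators occurring in expressions for $p$ and $q$. The base case, where both are generators, is precisely the definition of $\calI\calC$. For the inductive step, decompose $p = \gamma(p_0;p_1,\ldots,p_m)$ with $p_0 \in \calX$ of arity $m$ and $p_i$ of arity $k_i$, so that $k = k_1 + \cdots + k_m$. By associativity of $\gamma$ one has
\[
\gamma(p;q,\ldots,q) = \gamma(p_0;\gamma(p_1;q,\ldots,q),\ldots,\gamma(p_m;q,\ldots,q)) ,
\]
and the inductive hypothesis applies to each inner composite $\gamma(p_i;q,\ldots,q) = \gamma(q;p_i,\ldots,p_i).\sigma_{k_i,l}$. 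After invoking equivariance to extract the permutations, and a further application of associativity to lift all $q$'s to a common level above $p_0$, one obtains an expression containing a factor $\gamma(p_0;q,\ldots,q)$, to which the inductive hypothesis is applied once more to yield a final expression of the form $\gamma(q;p,\ldots,p).\tau$ for some $\tau \in S_{kl}$. A symmetric argument treats the case when $q$ is decomposable rather than $p$, completing the induction.

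The main technical obstacle is to verify that the permutation $\tau$ produced by this rewriting is exactly $\sigma_{k,l}$. Each application of the inductive hypothesis introduces a row-column swap, and the challenge is to confirm that the composite of the inner swaps $\sigma_{k_1,l},\ldots,\sigma_{k_m,l}$ with the outer $\sigma_{m,l}$, together with the block permutations contributed by the reassociations, matches the single transposition $\sigma_{k,l}$. This is most transparent by identifying $\sigma_{k,l}$ with the transposition of a $k \times l$ matrix read in row-major order: decomposing the $k$ rows into blocks of sizes $k_1,\ldots,k_m$ yields a two-stage transposition whose net effect is computed to coincide with the one-shot transposition from the explicit formula $(i-1)l+j \mapsto (j-1)k+i$. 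This bookkeeping is the only non-formal content of the argument, and it is this step that I expect to require the most care in writing up.
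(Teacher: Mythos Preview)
Your proposal is correct and follows essentially the same approach as the paper. The paper's proof is terser: it observes that it suffices to check that the set of pairs $(p,q)$ satisfying \eqref{eq:IC} is closed under the symmetric group actions $p\mapsto p.\alpha$, $q\mapsto q.\beta$ and under partial compositions $q\mapsto q\circ_s q'$, and declares both closures ``easily checked by direct inspection''; your induction on the total number of generators in $p$ and $q$ is the same argument unfolded with full compositions instead of partial ones, and you are right that the bookkeeping with $\sigma_{k,l}$ is the only nontrivial content hidden in that phrase.
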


\begin{proof}
It is sufficient to prove that if $p\in\calP(k)$ satisfies \eqref{eq:IC} with both $q\in\calQ(l)$, $q'\in\calQ(l')$, then $p.\alpha$ satisfies \eqref{eq:IC} with $q.\beta$  for all permutations $\alpha\in S_k$, $\beta\in S_l$, and $p$ satisfies \eqref{eq:IC} with $q\circ_s q'$ for any $1\le s\le l$.  Both of these are easily checked by direct inspection.
\end{proof}

\smallskip 

In the presence of constants, Boardman--Vogt tensor products exhibit various collapsing properties, which are variations of the Eckmann--Hilton argument~\cite{EH1961} in algebraic topology. Namely, the following result holds.

\begin{proposition}[{Fiedorowicz and Vogt~\cite[Prop.~3.8]{FW2015}}]
Suppose that the operads $\calP$ and $\calQ$ are such that $\calP(\underline{1})=\calQ(\underline{1})=\{\id\}$, and that the four components $\calP(\underline{0})$, $\calQ(\underline{0})$, $\calP(\underline{2})$, $\calQ(\underline{2})$ are nonempty. We have 
 \[ 
\calP\otimes\calQ\cong\uCom ,
 \]
where $\uCom$ is the operad of unital commutative associative algebras.
In particular, for the operad $\uAss$ of unital associative algebras, we have 
 \[
\uAss\otimes\uAss\cong\uCom . 
 \]
\end{proposition}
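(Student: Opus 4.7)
The plan is an operadic Eckmann--Hilton argument. The unique morphism $\phi\colon\calP\otimes\calQ\to\uCom$ exists because $\uCom$ is the terminal set operad; the goal is to invert it. The crucial preliminary observation is that $\calP(\underline{1})=\{\id\}$ automatically turns every nullary $e\in\calP(\underline{0})$ into a two-sided unit for every binary $\mu\in\calP(\underline{2})$: the partial compositions $\gamma(\mu;e,\id)$ and $\gamma(\mu;\id,e)$ lie in $\calP(\underline{1})=\{\id\}$, so they equal $\id$. The same reasoning shows $\calP(\underline{0})$ is a singleton (given $e,e'\in\calP(\underline{0})$, the identity $\mu(e,e')=e'$ from $e$ being a left unit and $\mu(e,e')=e$ from $e'$ being a right unit force $e=e'$), and analogously for $\calQ$.

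Write $e=e_\calP$, $f=e_\calQ$, $\cdot=\mu_\calP$, $*=\mu_\calQ$. The classical Eckmann--Hilton chain, carried out inside $\calP\otimes\calQ$ using the interchange $(a*b)\cdot(c*d)=(a\cdot c)*(b\cdot d)$ and the unit axioms, gives
\[
e=e\cdot e=(e*f)\cdot(f*e)=(e\cdot f)*(f\cdot e)=f*f=f,
\]
and parallel manipulations (setting $(a,b,c,d)=(a,e,e,b)$ for $\cdot=*$, $(e,a,b,e)$ for commutativity, and $(a,b,e,c)$ for associativity) give $\cdot=*$ together with commutativity and associativity of this common operation $\mu$. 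This defines an operad morphism $\psi\colon\uCom\to\calP\otimes\calQ$ sending the binary and nullary generators of $\uCom$ to $\mu$ and $e$; the composition $\phi\psi=\id_{\uCom}$ is automatic by terminality of $\uCom$.

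To conclude $\psi\phi=\id$, it suffices to show every element of $(\calP\otimes\calQ)(\underline{n})$ equals the iterated product $\mu^{(n)}$. The arity-$0$ and arity-$1$ cases follow by induction on tree depth: any composite collapses (via subtrees reducing to $e$ or $\id$) to $\gamma(p;e,\ldots,e)$ or $\gamma(p;e,\ldots,e,\id,e,\ldots,e)$ for some $p$ in $\calP(\underline{k})$ or $\calQ(\underline{k})$, which lives in $\calP(\underline{0})=\{e\}$ or $\calP(\underline{1})=\{\id\}$ (resp.\ in $\calQ$) and therefore collapses. For $n\geq 2$, induct on arity: any $p\in\calP(\underline{n})$ interchanges with $\mu_\calQ$ by Proposition~\ref{prop:InterchangeGen}, and the substitution $y_i=e$ for $i<n$ together with $x_n=e$ reduces
\[
p(x_1\cdot y_1,\ldots,x_n\cdot y_n)=p(x_1,\ldots,x_n)\cdot p(y_1,\ldots,y_n)
\]
to $p(x_1,\ldots,x_{n-1},y_n)=(x_1\cdots x_{n-1})\cdot y_n$, whence $p=\mu^{(n)}$. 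The same argument covers $\calQ(\underline{n})$.

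The main obstacle is the orchestration of step~3: the induction interleaves arity and tree depth, and must apply uniformly to arbitrary elements of $\calP$ and $\calQ$, including higher-arity generators unrelated to $\mu_\calP$ or $\mu_\calQ$. The enabling fact is always the same: the unit element, forced by $\calP(\underline{1})=\{\id\}$, absorbs into higher-arity operations to produce elements of the one-element spaces $\calP(\underline{0})$ or $\calP(\underline{1})$ (and their $\calQ$-counterparts), where collapse is free.
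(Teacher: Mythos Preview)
The paper does not prove this proposition; it is quoted from Fiedorowicz--Vogt with a citation and no argument. So there is no ``paper's own proof'' to compare against, and your proposal is a self-contained proof attempt.

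Your argument is the standard operadic Eckmann--Hilton proof, and the key steps are sound. The observation that $\calP(\underline{1})=\{\id\}$ forces every $e\in\calP(\underline{0})$ to be a two-sided unit for every $\mu\in\calP(\underline{2})$, and hence that $\calP(\underline{0})$ is a singleton, is correct and is exactly the point that makes the argument go. The Eckmann--Hilton chain is right (note that in the displayed interchange for $p\in\calP(\underline{n})$ you should be using the $\calQ$-product $*$; it does not matter once $\cdot=*$ is established, but it is worth being precise about which interchange you are invoking). The arity induction for $p\in\calP(\underline{n})$ is clean: $\gamma(p;e,\ldots,e,\id)\in\calP(\underline{1})=\{\id\}$ and $\gamma(p;\id,\ldots,\id,e)\in\calP(\underline{n-1})$ reduce the problem.

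There is one expository gap you should close. In the surjectivity step you announce that you will show every element of $(\calP\otimes\calQ)(\underline{n})$ equals $\mu^{(n)}$, but for $n\ge 2$ you only treat elements coming from $\calP(\underline{n})$ or $\calQ(\underline{n})$. You still need the sentence that finishes the job: since $\calP\otimes\calQ$ is generated (as an operad) by the images of $\calP$ and $\calQ$, once every image of an element of $\calP(\underline{m})$ or $\calQ(\underline{m})$ equals $\mu^{(m)}$, every iterated composition of such images is an iterated composition of the $\mu^{(m)}$, hence equals $\mu^{(n)}$ by the associativity and commutativity already proved. With that line added, the separate tree-depth argument for arities $0$ and $1$ becomes unnecessary (those cases are the base of your arity induction on $\calP$- and $\calQ$-elements), and the proof is complete.
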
 

Even in the set-up of this paper where constant operations are not allowed, unexpected phenomena arise. Let us consider the Boardman--Vogt square $\Ass\otimes\Ass$ of the operad $\Ass$ of non-unital associative algebras. It is generated by two associative products $\cdot$ and $\star$ satisfying the interchange law
 \[
(a_1\cdot a_2)\star (a_3\cdot a_4) = (a_1\star a_3)\cdot (a_2\star a_4) .
 \]
In \cite{Kock2007}, it was observed that an unexpected ``commutativity'' property holds in the operad $\Ass\otimes\Ass$. 

\begin{proposition}[{Kock~\cite[Prop.~2.3]{Kock2007}}]\label{prop:Kock}
In the Boardman--Vogt tensor product $\Ass\otimes\Ass$, the following holds in arity $16$:
\begin{align*}
&(a_1 \star a_2 \star a_3 \star a_4) \cdot (a_5 \star a_6 \star a_7 \star a_8) \cdot (a_9 \star a_{10} \star a_{11} \star a_{12}) \cdot (a_{13} \star a_{14} \star a_{15} \star a_{16}) = \\
&(a_1 \star a_2 \star a_3 \star a_4) \cdot (a_5 \star a_7 \star a_6 \star a_8) \cdot (a_9 \star a_{10} \star a_{11} \star a_{12}) \cdot (a_{13} \star a_{14} \star a_{15} \star a_{16}).
\end{align*}
In particular, the underlying $S_{16}$-module of $(\Ass\otimes\Ass)(\underline{16})$ is not free. 
\end{proposition}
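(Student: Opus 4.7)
The non-freeness of $(\Ass \otimes \Ass)(\underline{16})$ as an $S_{16}$-module follows immediately from the identity itself: the right-hand side is obtained from the left-hand side by applying the transposition $\sigma = (6\,7) \in S_{16}$ to its inputs, so the equality means that the common element $\pi \in (\Ass \otimes \Ass)(\underline{16})$ satisfies $\pi.\sigma = \pi$. Since $\sigma \neq \id$, the action of $S_{16}$ has a non-trivial stabiliser, contradicting the definition of freeness from Section~\ref{sec:recoll}. Hence the whole proposition reduces to proving the $16$-variable equality.

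To prove the equality itself I would work entirely with the defining relations of $\Ass \otimes \Ass$: associativity of $\cdot$, associativity of $\star$, and the binary interchange law $(p \star q) \cdot (r \star s) = (p \cdot r) \star (q \cdot s)$; by Proposition~\ref{prop:InterchangeGen}, the latter extends automatically to interchange of higher-arity $\cdot$- and $\star$-products. A useful preparatory step is the ``transposition identity'', provable by induction on $k + l$: for any $k \times l$ rectangular array $(a_{ij})$ the $\cdot$-product of the $k$ row-wise $\star$-products equals the $\star$-product of the $l$ column-wise $\cdot$-products. This already produces several useful rewrites of both LHS and RHS, but their canonical row- and column-first forms stay visibly distinct, so some non-canonical manipulation is required.

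The main step, following Kock, is to exhibit a non-canonical sequence of associativity and interchange moves aligning LHS and RHS. Using associativity of $\star$ to split row $2$ as $a_5 \star (a_6 \star a_7) \star a_8$ and row $3$ as $a_9 \star (a_{10} \star a_{11}) \star a_{12}$, one can combine them via the interchange of a ternary $\star$ with a binary $\cdot$ to obtain $(a_5 \cdot a_9) \star ((a_6 \star a_7) \cdot (a_{10} \star a_{11})) \star (a_8 \cdot a_{12})$, and then apply the $2 \times 2$ interchange to the inner block. Incorporating rows $1$ and $4$ with analogous re-parenthesizations and re-assembling via associativity should produce an expression in which $a_6$ and $a_7$ occupy positions that are symmetric under the swap; the mirror calculation applied to RHS (where the swapped occurrences appear in row $2$) yields the same expression, establishing the equality.

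The main obstacle is that no single application of interchange can swap two inputs — interchange is position-preserving in its geometric grid interpretation — so the identity must emerge from a genuinely global, multi-step manipulation that exploits all sixteen variables at once. The fact that no analogous collapse occurs in smaller arities reflects exactly this combinatorial threshold: only with a full $4 \times 4$ structure is there enough room for the chain of interchange and associativity moves to carry $a_6$ past $a_7$ without disturbing the surrounding $\cdot$- and $\star$-structure. Locating the precise chain of rewrites is the delicate step, and is where I would consult Kock's original computation for the correct sequence of moves.
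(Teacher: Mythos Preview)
The paper does not prove this proposition; it merely records the result with a citation to Kock. So there is no ``paper's own proof'' to compare against, and the question is whether your argument stands on its own.

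Your first paragraph is entirely correct: once the identity is established, non-freeness is immediate because the element is fixed by the transposition $(6\,7)$.

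The remaining paragraphs, however, do not constitute a proof of the identity. You correctly identify the available tools (associativity of each product, the interchange law, and the derived ``rectangular transposition'' identity), and you sketch a plausible opening move on rows $2$ and $3$. But you never exhibit the actual chain of rewrites that carries the left-hand side to the right-hand side; instead you write that locating it ``is where I would consult Kock's original computation''. That is precisely the content of the proposition, so deferring it leaves the proof with a genuine gap. The difficulty is real: as you yourself observe, no single interchange move permutes inputs, so the identity only emerges from a carefully chosen global sequence, and it is not at all obvious \emph{a priori} that such a sequence exists. A complete proof must display it (or give a structural argument that forces it).

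One minor inaccuracy: your claim that ``no analogous collapse occurs in smaller arities'' is contradicted by Proposition~\ref{prop:MurraySara}, which shows that $(\Ass\otimes\Ass)(\underline{9})$ is already non-free. Kock's arity-$16$ identity is simply the first one that was discovered, not the smallest.
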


The latter result was improved by the first author in his recent work with Madariaga~\cite{BM2016}.

\begin{proposition}[{\cite[Prop.~3.4 and Th.~4.2]{BM2016}}]\label{prop:MurraySara}
In each arity $n\le 8$ the underlying $S_n$-module of $(\Ass\otimes\Ass)(\underline{n})$ is free. The underlying $S_{9}$-module of $(\Ass\otimes\Ass)(\underline{9})$ is not free. In particular, the following relation implying that of Proposition~\ref{prop:Kock} holds:
 \[
(a_1 \star a_2) \cdot (a_3 \star a_4 \star a_5 \star a_6) \cdot (a_7 \star a_8 \star a_9) = 
(a_1 \star a_2) \cdot (a_3 \star a_5 \star a_4 \star a_6) \cdot (a_7 \star a_8 \star a_9) .
 \]
\end{proposition}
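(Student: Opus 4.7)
The plan is to present $\Ass\otimes\Ass$ concretely using Proposition~\ref{prop:InterchangeGen} as the quotient of the free operad on two binary generators $\cdot$ and $\star$ by the ideal generated by associativity of each product and the interchange law $(a_1\star a_2)\cdot(a_3\star a_4)=(a_1\cdot a_3)\star(a_2\cdot a_4)$. I would then set up an algorithmic procedure -- a Gr\"obner basis for shuffle-tree monomials with an appropriate admissible order, equivalently a confluent rewriting system -- producing a set of normal-form tree monomials whose evaluations at the ordered leaves $(a_1,\ldots,a_n)$ form a basis of $(\Ass\otimes\Ass)(\underline{n})$. The right $S_n$-action is then computed by permuting the leaf labels and rewriting back to normal form, and freeness of the $S_n$-module reduces to checking that every $S_n$-orbit on the set of normal forms has size $n!$.

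For arities $n\le 8$, this becomes a finite computer-algebra check: enumerate the normal forms in each arity, group them into $S_n$-orbits, and verify that every stabiliser is trivial. A table of dimensions $\dim(\Ass\otimes\Ass)(\underline{n})$ together with the resulting orbit counts would accompany the verification and substantiate freeness; the sizes grow rapidly but remain tractable through $n=8$.

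For $n=9$, the plan is to produce a specific non-trivial stabiliser. Set
\[
T=(a_1 \star a_2) \cdot (a_3 \star a_4 \star a_5 \star a_6) \cdot (a_7 \star a_8 \star a_9),
\]
and prove the identity $T=T.(4\ 5)$ by a chain of rewrites using only associativity of $\cdot$ and $\star$ together with the interchange law. The strategy is to rebracket $(a_3 \star a_4 \star a_5 \star a_6)$ as $(a_3 \star a_4)\star(a_5 \star a_6)$, then regroup the ambient $\cdot$-structure so as to expose $2\times 2$ patches on which interchange applies; after this one rearranges by associativity of $\cdot$, applies interchange in the reverse direction, and recombines to land on $T.(4\ 5)$.

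The main obstacle is finding the correct sequence of interchanges: no single application suffices, and the specific shape $(2,4,3)$ of $\star$-arities with total $9$ is precisely what makes the intermediate mixed $\cdot$-$\star$ patterns line up while preventing the rearrangement from being undone by the bracketing on the flanking factors. Once the identity is established, a non-trivial element of $S_9$ stabilises a non-zero element of $(\Ass\otimes\Ass)(\underline{9})$, which rules out freeness of the $S_9$-module, and the Kock relation of Proposition~\ref{prop:Kock} follows by applying $T$-style identifications inside a further $\cdot$-$\star$ arrangement.
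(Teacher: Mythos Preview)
The paper does not prove this proposition at all: it is quoted verbatim as a result from \cite{BM2016} (Prop.~3.4 and Th.~4.2 there), with no argument given in the present paper. So there is no ``paper's own proof'' to compare against; your proposal is an attempt to reconstruct the argument of the cited reference.

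Your outline is in the right spirit and broadly matches what \cite{BM2016} actually does: a finite computer-algebra enumeration for $n\le 8$, and an explicit chain of rewrites (associativity plus interchange) establishing the arity-$9$ identity. Two cautions, though. First, what you have written is a plan, not a proof: for $n\le 8$ you would need to exhibit the normal forms (or at least the dimensions and orbit counts) rather than simply assert that ``the sizes remain tractable''; and for $n=9$ you would need to produce the actual sequence of interchanges, which is the whole content of the result --- saying that ``the specific shape $(2,4,3)$ is precisely what makes the intermediate patterns line up'' is an assertion, not an argument. Second, your proposed mechanism (a Gr\"obner basis for shuffle operads) is not quite how \cite{BM2016} proceeds: that paper works directly with double-semigroup words and uses a hands-on combinatorial analysis of the rewriting relation rather than the shuffle-operad machinery. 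Either route should work, but if you go the Gr\"obner route you must verify confluence, which for these cubic relations is not automatic.
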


\begin{remark}
It is natural to ask what triggers the non-freeness of the underlying $S_{9}$-module of $(\Ass\otimes\Ass)(\underline{9})$. One natural guess which is suggested by the results of this paper is that $9=3\cdot 3$, where $3$ is the smallest arity in which the operad $\Ass$ has a nontrivial relation. It would be interesting to determine whether or not it is true that for operads $\calP$ and $\calQ$ whose underlying $\mathbb{S}$-modules are free, the Boardman--Vogt tensor product $\calP\otimes\calQ$ has a free underlying $\mathbb{S}$-module up to arity $kl-1$, where $k$ and $l$ are, respectively, the smallest arities where $\calP$ and $\calQ$ have relations. 
\end{remark}

\section{A geometric model for interchange of absolutely free operads}\label{sec:subdiv}

It turns out that interchanging $d$ absolutely free operads admits a remarkable geometric representation. We describe it in three steps. First, we consider a particular case when each of these operads is generated by one (not necessarily binary) generator, and define a map from the corresponding Boardman--Vogt tensor product into the operad of little $d$-rectangles. Next, we present a geometric construction of an arbitrary absolutely free operad in terms of subdivisions of the unit interval with some extra labelling data. Finally, we consider a certain superposition of these two constructions to represent arbitrary Boardman--Vogt tensor products. Note that at this stage we do not claim this representation to be faithful; the proof of its faithfulness is one of the key results of this paper which appears in Section~\ref{sec:proofs}.

\subsection{Interchanging one-generated absolutely free operads}\label{sec:OneGenerated}

Suppose that $\calT(\calX_1)$, $\calT(\calX_2)$, \ldots, $\calT(\calX_d)$ are (reduced connected) absolutely free operads, and 
suppose that for each collection $\calX_k$ there exists an integer $a_k>1$ for which 
 \[
\calX_k(\underline{a})=
\begin{cases}
S_{a_k}, \quad a=a_k,\\
\,\,\varnothing,\,\,\quad a\ne a_k ,
\end{cases}
 \]  
in other words, $\calX_k$ is freely generated by one element of arity $a_k$. 

Let us consider a version of the little $d$-cubes operad which we shall call the operad of little $d$-rectangles, and denote $\Rect_d$. By definition, its component of arity $n$ parametrises all possible ways to place $n$ rectangular boxes of dimension $d$ labelled $1,\ldots,n$ inside the unit cube so that their interiors are disjoint and their faces are parallel to the faces of the cube.  The operadic composition $\gamma(c;c_1,\ldots,c_m)$ of such configurations shrinks each of the configurations $c_i$ in the directions of the coordinate axes to ensure that the ambient unit cube fits exactly into the $i$-th rectangle of $c$, and then glues the configuration of rectangles thus obtained in place of that rectangle, adjusting the labels in the usual way.

\begin{definition}\label{def:CutOperad1}
For collections $\calX_1$, \ldots, $\calX_d$ as above,  the \emph{cut operad} $\calC^{(d)}_{\calX_\bullet}$ is the suboperad of $\Rect_d$ generated by the operations $\omega_k$, $1\le k\le d$, where $\omega_k$ is the configuration of rectangles 
\begin{gather*}
[0,1]^{k-1}\times [0,1/a_k]\times[0,1]^{d-k},\\
[0,1]^{k-1}\times [1/a_k,2/a_k]\times[0,1]^{d-k},\\
\vdots \\
[0,1]^{k-1}\times [(a_k-1)/a_k,1]\times[0,1]^{d-k},\\
\end{gather*}
numbered $1,\ldots,a_k$ in the order they are listed here. 
\end{definition}

Let us show that these operations interchange, which by Proposition \ref{prop:InterchangeGen} implies that there exists a surjective homomorphism $\calT(\calX_1)\otimes\calT(\calX_2)\otimes\cdots\otimes\calT(\calX_d)\to\calC^{(d)}_{\calX_\bullet}$. 

\begin{lemma}\label{lm:LittleRectInt}
The operations $\omega_i$ pairwise interchange. 
\end{lemma}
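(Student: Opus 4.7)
The plan is to verify the interchange relation \eqref{eq:IC} directly on the level of geometric configurations in $\Rect_d$, by showing that both compositions produce literally the same subdivision of $[0,1]^d$ and that the two labelings differ exactly by the row--column exchange $\sigma_{a_i,a_j}$.

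First I would fix $i\ne j$, say $i<j$, and unfold $\gamma(\omega_i;\omega_j,\ldots,\omega_j)$. The outer operation $\omega_i$ cuts $[0,1]^d$ into $a_i$ slabs perpendicular to the $i$-th coordinate axis, with the $r$-th slab ($1\le r\le a_i$) carrying label $r$. Grafting a copy of $\omega_j$ inside each slab (using the operadic composition of $\Rect_d$, which rescales the unit cube to the slab and installs the configuration there) bisects that slab into $a_j$ sub-boxes perpendicular to the $j$-th axis; the sub-box indexed by $(r,s)$ has the form
\[
[0,1]^{i-1}\times\bigl[\tfrac{r-1}{a_i},\tfrac{r}{a_i}\bigr]\times[0,1]^{j-i-1}\times\bigl[\tfrac{s-1}{a_j},\tfrac{s}{a_j}\bigr]\times[0,1]^{d-j}.
\]
Under the standard relabeling for operadic composition, this sub-box receives the label $(r-1)a_j+s$.

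Symmetrically, $\gamma(\omega_j;\omega_i,\ldots,\omega_i)$ first cuts perpendicular to the $j$-th axis and then perpendicular to the $i$-th axis. Because bisections along two distinct coordinate axes commute geometrically, the collection of resulting sub-boxes is \emph{identical} to the one produced above, but the sub-box at position $(r,s)$ now receives the label $(s-1)a_i+r$. The two labelings therefore differ by the permutation sending $(r-1)a_j+s$ to $(s-1)a_i+r$, which is exactly $\sigma_{a_i,a_j}$ from \eqref{eq:IC}. Interpreting the right action of $S_n$ on $\Rect_d(\underline{n})$ as relabeling of boxes, this yields the equality $\gamma(\omega_i;\omega_j,\ldots,\omega_j)=\gamma(\omega_j;\omega_i,\ldots,\omega_i).\sigma_{a_i,a_j}$, which is precisely the interchange relation.

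Because bisections along different coordinate axes manifestly commute, there is no conceptual difficulty here; the only step requiring real care is the bookkeeping of the labels, namely checking that the operadic composition conventions in $\Rect_d$ produce exactly the indexings $(r-1)a_j+s$ and $(s-1)a_i+r$, and that the convention used for the right $S_n$-action matches the one implicit in the definition of the Boardman--Vogt tensor product (with no unintended inverse or conjugation). Once these conventions are fixed, the lemma follows by direct inspection; combining it with Proposition \ref{prop:InterchangeGen} yields the announced surjection from $\calT(\calX_1)\otimes\cdots\otimes\calT(\calX_d)$ onto $\calC^{(d)}_{\calX_\bullet}$.
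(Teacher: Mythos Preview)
Your proof is correct and follows exactly the same approach as the paper's: compute both compositions as iterated hyperplane cuts of the unit cube, observe that the resulting unlabelled subdivisions coincide, and check that the labelings differ precisely by the row--column permutation $\sigma_{a_i,a_j}$. Your version is simply more explicit than the paper's, which states the same observation in prose without writing down the boxes or the indices $(r-1)a_j+s$ and $(s-1)a_i+r$.
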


\begin{proof}
We see that the operation $\gamma(\omega_k;\omega_l,\ldots,\omega_l)$ is obtained by first cutting the unit cube into $a_k$ equal parts in the direction of the $k$-th coordinate hyperplane, and then cutting each of the parts thus obtained into $a_l$ equal parts in the directions of the $l$-th coordinate hyperplane. The operation $\gamma(\omega_l;\omega_k,\ldots,\omega_k)$ is obtained by first cutting the unit cube into $a_l$ equal parts in the direction of the $l$-th coordinate hyperplane, and then cutting each of the parts thus obtained into $a_k$ equal parts in the directions of the $k$-th coordinate hyperplane. The only difference between the two is the labelling of the interiors of the $a_ka_l$ parts thus obtained, and that difference is fixed by the permutation $\sigma_{k,l}$.
\end{proof}

\subsection{A geometric model for an absolutely free operad}

In this section, we present a geometric model for a (reduced connected) absolutely free operad.  Let us assume that $\calX$ is a free symmetric collection of finite sets with $\calX(0)=\calX(1)=\varnothing$. 

\begin{definition}\label{def:FreeOp}
An \emph{$\calX$-subdivision} of arity $n$ of a line segment $[a,b]\subset\mathbb{R}$ is defined by the following recursive rule:
\begin{itemize}[leftmargin=*]
\item The trivial subdivision consisting just of the segment $[a,b]$ without any extra data is the only $\calX$-subdivision of arity $1$. 
\item Choose an integer $m\ge 2$, a partition $n=n_1+\cdots+n_m$, and an element $w\in\calX(\underline{m})$. Let $t_k=((m-k)a+k b)/m$ with $k=1,\ldots,m-1$ be the $m-1$ points that divide $[a,b]$ into $m$ equal parts. Let us label each of these $m-1$ points by the element $w$, and impose arbitrary $\calX$-subdivisions of arities $n_1$, \ldots, $n_m$ on the $m$ segments 
 \[
[a,t_1], \quad [t_1,t_2], \quad  \ldots, \quad  [t_{m-1},b] . 
 \]
All $\calX$-subdivisions of arity $n$ are obtained in this way. 
\end{itemize} 
In other words, we cut the segment into several equal parts, cut each of the parts in several equal parts, etc., each time labelling the cuts by a generator of the free operad of appropriate arity.
\end{definition}

This definition trivially implies that $\calX$-subdivisions of arity $n$ of the unit interval $[0,1]$ are in one-to-one correspondence with elements of what is known as the absolutely free algebra (or term algebra) for the signature $\calX$. In order to model operads, we should label interiors of the segments into which we subdivide the unit interval by integers $1,\ldots,n$ in all possible ways. The operad composition comes from substitution of subdivisions in the same way as in Definition \ref{def:CutOperad1}; the only difference is that when inserting subdivisions we must also copy the labels of cuts. The operad thus obtained is immediately seen to be isomorphic to the absolutely free operad $\calT(\calX)$. 

\subsection{Interchanging several absolutely free operads}

We shall combine the previous two constructions to represent arbitrary tensor products of absolutely free operads. Let us now assume that $\calX_1$, \ldots, $\calX_d$ are free symmetric collections of finite sets with $\calX_k(\underline{0})=\calX_k(\underline{1})=\varnothing$ for all $1\le k\le d$.  

\begin{definition}
An \emph{$\calX_\bullet$-subdivision} of arity $n$ of a $d$-dimensional rectangle 
 \[
 R:=[a_1,b_1]\times \cdots\times [a_d,b_d]\subset\mathbb{R}^d
 \]
 is defined by the following recursive rule:
\begin{itemize}[leftmargin=*]
\item The trivial subdivision consisting just of the rectangle $R$ without any extra data is the only $\calX_\bullet$-subdivision of arity $1$. 
\item Choose an integer $1\le k\le d$, an integer $m\ge 2$, a partition $n=n_1+\cdots+n_m$, and an element $w\in\calX_k(\underline{m})$. Let $t_{k,l}=((m-l)a_k+l b_k)/m$, $1\le l\le m-1$, and let
 \[
\beta_l:=R\cap \{ x_k=t_{k,l} \}
 \]
be the $m-1$ hyperplane cuts orthogonal to the $k$-th direction that divide $R$ into $m$ equal parts. Let us label points of each of these cuts by the element $w$, and impose arbitrary $\calX_\bullet$-subdivisions of arities $n_1$, \ldots, $n_m$ on the $m$ parts 
\begin{gather*}
[a_1,b_1]\times \cdots\times[a_k,t_{k,1}],\times\cdots\times [a_d,b_d],\\
[a_1,b_1]\times \cdots\times[t_{k,1},t_{k,2}],\times\cdots\times [a_d,b_d],\\
\vdots \\
[a_1,b_1]\times \cdots\times[t_{k,m-1},b_k],\times\cdots\times [a_d,b_d] .
\end{gather*}
All $\calX_\bullet$-subdivisions of arity $n$ are obtained in this way. 
\end{itemize} 
In other words, we cut $R$ into several equal parts in one of the directions of coordinate hyperplanes, cut each of the parts in several equal parts, etc., each time labelling the cuts by a generator of appropriate arity.
\end{definition}

Let us use this geometric construction to define an operad. This generalises Definition \ref{def:CutOperad1}; the cut operad from that definition is tautologically isomorphic to the cut operad below when all operads $\calT(\calX_i)$ are one-generated.

\begin{definition}\label{def:CutOperad2}
The $d$-dimensional \emph{cut operad} $\calC^{(d)}_{\calX_\bullet}$ has, as its arity $n$ component, the $\calX_\bullet$-subdivisions of arity $n$ of the unit $d$-cube $[0,1]^d$ where interiors of the rectangles into which we subdivide the cube are labelled by integers $1,\ldots,n$ in all possible ways. 
The operad composition comes from substitution of labelled subdivisions in the same way as in the paragraph following Definition \ref{def:FreeOp}. 
\end{definition}

Let us establish that this construction gives a representation of the $d$-fold Boardman--Vogt tensor product  
$\calT(\calX_1)\otimes\cdots\otimes\calT(\calX_d)$.

\begin{proposition}
Let us consider, for each $x\in\calX_k(\underline{a_k})$, the operation $\omega_{k,x}\in\calC^{(d)}_{\calX_\bullet}(\underline{a_k})$ that corresponds to the $\calX_\bullet$-subdivision of the unit cube
\begin{gather*}
[0,1]^{k-1}\times[0,1/a_k]\times[0,1]^{d-k},\\
[0,1]^{k-1}\times[1/a_k,2/a_k]\times[0,1]^{d-k},\\
\ldots \\
[0,1]^{k-1}\times[(a_k-1)/a_k,1]\times[0,1]^{d-k},\\
\end{gather*}
where the parts are numbered $1,\ldots,a_k$ in the order they are listed here and all the $a_k-1$ cuts are labelled $x$.  The operations $\omega_{k,x}$ for various choices of $k$ and $x$ generate the operad $\calC^{(d)}_{\calX_\bullet}$. Moreover,  the operations $\omega_{k,x}$ and $\omega_{l,y}$ interchange for $k\ne l$. 
\end{proposition}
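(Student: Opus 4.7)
The plan is to extend the argument of Lemma~\ref{lm:LittleRectInt} to allow multiple generators in each direction. Two things need to be established: that the operations $\omega_{k,x}$ generate $\calC^{(d)}_{\calX_\bullet}$, and that $\omega_{k,x}$ and $\omega_{l,y}$ interchange whenever $k\ne l$.

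For the generation claim, I would induct on the arity $n$ of an $\calX_\bullet$-subdivision $s$ of $[0,1]^d$. The case $n=1$ is the operadic unit. For $n\ge 2$, the recursive definition of an $\calX_\bullet$-subdivision picks out an outermost direction $k$, an integer $m$, an element $x\in\calX_k(\underline{m})$, a partition $n=n_1+\cdots+n_m$, and $\calX_\bullet$-subdivisions $s_1,\ldots,s_m$ of the $m$ congruent $k$-slabs. Rescaling each slab back to the unit cube turns $s_i$ into an $\calX_\bullet$-subdivision $\tilde{s}_i$ of the unit cube of strictly smaller arity $n_i$, and by construction $s=\gamma(\omega_{k,x};\tilde s_1,\ldots,\tilde s_m).\pi$ for the permutation $\pi\in S_n$ that restores the prescribed labelling of the cells of $s$. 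The inductive hypothesis puts each $\tilde s_i$ in the suboperad generated by the $\omega_{k,x}$'s, so $s$ lies there too.

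For the interchange claim with $k\ne l$, I would compute both composites geometrically. In $\gamma(\omega_{k,x};\omega_{l,y},\ldots,\omega_{l,y})$ the outer operation cuts $[0,1]^d$ along direction $k$ into $a_k$ congruent slabs with $a_k-1$ separating hyperplanes labelled $x$; inserting a copy of $\omega_{l,y}$ into each slab produces, within each slab, $a_l-1$ cuts perpendicular to direction $l$ situated at the heights $x_l=j/a_l$. Crucially, because $k\ne l$ these local cuts occur at exactly the same heights in every slab and so fit together into $a_l-1$ global hyperplane cuts of $[0,1]^d$ labelled $y$. The result is therefore the $a_k\times a_l$ grid subdivision of $[0,1]^d$ whose $k$-hyperplanes carry label $x$ and whose $l$-hyperplanes carry label $y$, with cells indexed row-by-row (first by the $k$-slab, then by the $l$-subslab). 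Running the same analysis for $\gamma(\omega_{l,y};\omega_{k,x},\ldots,\omega_{k,x})$ yields the same labelled geometric subdivision, but with cells indexed column-by-column. The passage between the two indexings is exactly the ``exchange of rows and columns'' permutation $\sigma_{a_k,a_l}$ of \eqref{eq:IC}, which is precisely the interchange relation.

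The only real obstacle is bookkeeping: one must verify that the rescalings involved in operadic composition of little $d$-rectangles are compatible with the equal-parts bisection pattern of Definition~\ref{def:FreeOp}, and keep careful track of the induced permutations on the cell labels. Both issues are already implicit in Lemma~\ref{lm:LittleRectInt}, which is the case where each $\calX_k$ has a single generator, and no genuinely new combinatorics is needed.
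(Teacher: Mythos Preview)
Your proposal is correct and matches the paper's own proof, which simply notes that generation follows from the recursive definition of $\calC^{(d)}_{\calX_\bullet}$ and that interchange is proved completely analogously to Lemma~\ref{lm:LittleRectInt}. You have merely spelled out both steps in more detail than the paper does.
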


\begin{proof}
The first statement follows from the definition of the operad $\calC^{(d)}_{\calX_\bullet}$. The second one is proved completely analogously to Lemma \ref{lm:LittleRectInt}. 
\end{proof}

\begin{corollary}
There exists a surjective homomorphism 
 \[
\calT(\calX_1)\otimes\calT(\calX_2)\otimes\cdots\otimes\calT(\calX_d)\twoheadrightarrow\calC^{(d)}_{\calX_\bullet} .
 \]
\end{corollary}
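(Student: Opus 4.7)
The plan is to deduce this corollary directly from the preceding proposition combined with Proposition~\ref{prop:InterchangeGen}. Since each $\calT(\calX_k)$ is an absolutely free operad, it admits the trivial presentation $\calT(\calX_k) = \calT(\calX_k)/(\varnothing)$ with no relations. Applying Proposition~\ref{prop:InterchangeGen} inductively to the $d$-fold Boardman--Vogt tensor product therefore yields the presentation
\[
\calT(\calX_1)\otimes\cdots\otimes\calT(\calX_d) = \calT(\calX_1\sqcup\cdots\sqcup\calX_d)/(\calI\calC),
\]
where $\calI\calC$ consists solely of the interchange relations between generators of distinct $\calX_k$'s.

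First, I would define a morphism $\Phi\colon \calT(\calX_1\sqcup\cdots\sqcup\calX_d)\to \calC^{(d)}_{\calX_\bullet}$ out of the free operad by the universal property, sending each generator $x\in \calX_k$ to the cut operation $\omega_{k,x}\in\calC^{(d)}_{\calX_\bullet}(\underline{a_k})$ introduced in the preceding proposition. Next, I would verify that $\Phi$ descends to the quotient by the ideal $(\calI\calC)$: this is exactly the content of the second assertion of the preceding proposition, which states that $\omega_{k,x}$ and $\omega_{l,y}$ interchange whenever $k\ne l$. Hence $\Phi$ factors through a well-defined operad morphism
\[
\bar{\Phi}\colon \calT(\calX_1)\otimes\cdots\otimes\calT(\calX_d)\longrightarrow \calC^{(d)}_{\calX_\bullet}.
\]

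Finally, surjectivity of $\bar{\Phi}$ follows from the first assertion of the preceding proposition, namely that the operations $\omega_{k,x}$ generate $\calC^{(d)}_{\calX_\bullet}$: every element of the cut operad is a composition of generators $\omega_{k,x}$, and each such generator lies in the image of $\bar{\Phi}$.

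There is no real obstacle here; the corollary is essentially a formal consequence of the preceding proposition combined with the generators-and-relations description of Boardman--Vogt tensor products supplied by Proposition~\ref{prop:InterchangeGen}. The genuinely non-trivial step---namely, establishing injectivity and hence that $\bar{\Phi}$ is an isomorphism---is deferred to Theorem~\ref{th:Iso} in Section~\ref{sec:proofs}, and will require the homological machinery announced in the introduction.
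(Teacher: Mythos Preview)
Your proof is correct and follows exactly the reasoning the paper intends: the corollary is stated without proof precisely because it is an immediate consequence of the preceding proposition together with Proposition~\ref{prop:InterchangeGen}, just as you spell out. The paper already signals this line of argument earlier (after Lemma~\ref{lm:LittleRectInt}) in the one-generated case, and the general case is handled identically.
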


\section{Proof of the main theorem}\label{sec:proofs}

In the previous section, we established that the cut operad $\calC^{(d)}_{\calX_\bullet}$ is a homomorphic image of $\calT(\calX_1)\otimes\cdots\otimes\calT(\calX_d)$. We shall now establish that these operads are isomorphic. The proof of this result is obtained through an indirect argument. To make that argument more transparent, we start sketch a proof of the recurrence relation for the numbers of elements in the cut operad representing $d$ interchanging binary operations, and then leave the combinatorics universe that was sufficient thus far and encode the more general recurrence relation homologically. The main result then follows from general properties of minimal resolutions of right modules over operads. 

\subsection{Sketch of enumeration of binary cuts in $d$ dimensions}

Counting binary cuts of the unit square is fairly straightforward.  Let $C^{(2)}_n$ be the number of distinct subdivisions of the unit square into $n$ pieces which are obtained by iterated bisections orthogonal to the coordinate axes.  Since there are two different directions, a first approximation to the recurrence relation is the same as for the Catalan numbers but with two different types of parentheses; namely,
  \[
  C^{(2)}_n = 2 \sum_{i=1}^{n-1} C^{(2)}_i C^{(2)}_{n-i},
  \]
as we need to choose the direction of the first cut, and then subdivide the two resulting rectangles. This involves double counting when we examine ``full'' bisections in two orthogonal directions corresponding to the interchange law. This double counting is easy to correct, and the actual recurrence relation is 
  \[
  C^{(2)}_n = 2 \sum_{i=1}^{n-1} C^{(2)}_i C^{(2)}_{n-i}\,\,\,  - \sum_{n_1+n_2+n_3+n_4=n} C^{(2)}_{n_1} C^{(2)}_{n_2} C^{(2)}_{n_3} C^{(2)}_{n_4} ,
  \]
which formalises the na\"ive idea that the doubly counted subdivisions are those where we make two perpendicular cuts, and then subdivide the four resulting squares. If we denote by $f_2(t)$ the generating function for the numbers $C^{(2)}_n$, this recurrence relation can be written in a concise form 
 \[
f_2(t)-2f_2(t)^2+f_2(t)^4=t ,
 \]
which takes into account the initial condition $C^{(2)}_1=1$. These numbers are documented in the OEIS entry ``association types in 2-dimensional algebra'' \cite[Sequence A236339]{OEIS}.  

This argument easily generalises to the $d$-dimensional case. The corresponding recurrence relation for the numbers $C^{(d)}_n$ of distinct subdivisions of the unit cube into $n$ parts becomes, by a similar inclusion-exclusion argument,
 \[
C^{(d)}_n 
  =
  \sum_{k=1}^d
  \,
  \Bigg[
  \,
  (-1)^{k-1}
  \,
  \binom{d}{k}
  \sum_{\substack{
  n_1,\ldots,n_{2^k}\ge 1 
  \\
  n_1+\cdots+n_{2^k}=n}}
  \prod_{i=1}^{2^k} \, C^{(d)}_{n_i}
  \,
  \Bigg] ,
 \]
or, in terms of the generating function $f_d(t)$ for the numbers $C^{(d)}_n$, 
 \[
\sum_{k=0}^d \binom{d}{k} f_d(t)^{2^k} = t . 
 \]
A rigorous proof of this relation follows from a more general result obtained by homological methods, see Corollary \ref{cor:FuncEq} below.

\subsection{A minimal resolution of the augmentation module} 

In this section, we give a homological statement which formalises the inclusion-exclusion argument above for the general cut operad. For that, we have to leave the set-theoretic context, and work with linearisations of the corresponding set operads. Below, the notation $\calC^{(d)}_{\calX_\bullet}$ is used for the linearised cut operad; we hope that it does not lead to a confusion.

\begin{lemma}\label{lm:CutResol}
There exists a minimal resolution 
 \[
\big((\mathbb{I}\oplus\k\calX_1)\Box\cdots\Box(\mathbb{I}\oplus\k\calX_d)\big)\circ\calC^{(d)}_{\calX_\bullet}
 \]
of the augmentation $\calC^{(d)}_{\calX_\bullet}$-module $\mathbb{I}$ by free right modules. Here the homological degree of all factors $\mathbb{I}$ is equal to zero, and the homological degree of $\k\calX_k$ is equal to $1$ for all $1\le k\le d$.
\end{lemma}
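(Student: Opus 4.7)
The plan is to treat the asserted complex as an operadic analogue of a Koszul complex: the $d$ interchanging families of generators $\omega_{k,\bullet}$ of $\calC^{(d)}_{\calX_\bullet}$ play the role of $d$ commuting variables, and the matrix product $\Box$ provides the combinatorial bookkeeping needed to select subsets of directions. First, expand the matrix product by distributivity: $(\mathbb{I}\oplus\k\calX_1)\Box\cdots\Box(\mathbb{I}\oplus\k\calX_d)$ decomposes as a direct sum over subsets $S\subseteq\underline{d}$, where the $S$-summand is the iterated $\Box$-product in which $\k\calX_k$ appears for $k\in S$ and $\mathbb{I}$ appears for $k\notin S$ (the latter being the unit for $\Box$). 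Place the $S$-summand in homological degree $|S|$, so that composing on the right with $\calC^{(d)}_{\calX_\bullet}$ yields a graded free right $\calC^{(d)}_{\calX_\bullet}$-module.

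Next, define the differential. With a sign $\varepsilon(k,S)=(-1)^{|\{j\in S\,:\,j<k\}|}$, for each $k\in S$ and each basis element $x\in\calX_k(\underline{a_k})$ in the $k$-th tensor factor, interpret $x$ as the generator $\omega_{k,x}\in\calC^{(d)}_{\calX_\bullet}(\underline{a_k})$ of Definition~\ref{def:CutOperad2}, delete the $k$-th factor from $S$, and absorb $\omega_{k,x}$ into the cut-operad factor by operadic composition in the slots determined by the $\Box$-orthogonality data; the orthogonality condition in the definition of $\Box$ is precisely what ensures compatibility with the $a_k$ branches produced by $\omega_{k,x}$. The full differential is the signed sum $\sum_{k\in S}\varepsilon(k,S)\,\partial_k$. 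To check $\partial^2=0$, observe that for distinct $k,l\in S$ the compositions $\partial_k\partial_l$ and $\partial_l\partial_k$ realise the iterated operadic compositions $\gamma(\omega_{k,x};\omega_{l,y},\ldots,\omega_{l,y})$ and $\gamma(\omega_{l,y};\omega_{k,x},\ldots,\omega_{k,x})$; the proposition following Definition~\ref{def:CutOperad2} (extending Lemma~\ref{lm:LittleRectInt}) identifies these up to the row-column permutation $\sigma_{k,l}$, and the chosen Koszul signs cancel. Minimality is automatic, since every $\omega_{k,x}$ has arity $a_k\geq 2$ and so $\partial$ takes values in the module formed from the augmentation ideal of $\calC^{(d)}_{\calX_\bullet}$.

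The main obstacle is exactness. My plan is induction on $d$. For $d=1$ the complex reduces to the standard resolution $\k\calX_1\circ\calT(\calX_1)\to\calT(\calX_1)\to\mathbb{I}$ of the augmentation module over an absolutely free operad, which is exact because the augmentation ideal of $\calT(\calX_1)$ is a free right module generated by $\calX_1$ (every nontrivial tree has a unique root generator). For the inductive step, split the complex according to whether $d\in S$ and present it as the mapping cone of a morphism from the shifted sub-complex with $d\in S$ to the sub-complex with $d\notin S$; the latter is the $(d-1)$-dimensional resolution built on $(\calX_1,\ldots,\calX_{d-1})$ assembled via $\Box$ with $\mathbb{I}$, and a filtration by the number of outermost direction-$d$ cuts reduces exactness to the inductive hypothesis combined with the $d=1$ case applied slicewise. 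An alternative combinatorial route is to construct a contracting homotopy explicitly: for each $\calX_\bullet$-subdivision $T$ pick the minimal index $k$ admitting an outermost layer of parallel hyperplane cuts, strip off one such layer, and promote the associated element of $\calX_k$ from the cut-operad factor into the Koszul factor. In either approach the delicate point is that the ``outermost cut direction'' is not canonical---the interchange relations allow several valid choices---so the higher-degree terms of the complex are precisely what is needed to compensate for this ambiguity.
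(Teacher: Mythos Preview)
Your framework matches the paper's: both treat the complex as a Koszul-type complex, expand the $\Box$-product over subsets $S\subseteq\underline{d}$, define the differential by converting a ``black'' generator in the $\Box$-factor into a ``white'' cut in the $\calC^{(d)}_{\calX_\bullet}$-factor with alternating signs, and verify $\partial^2=0$ via the interchange law. Your observation on minimality is exactly the paper's.

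The genuine gap is in the exactness argument. Your inductive route~(a) is only a gesture: the subcomplex with $d\notin S$ is a complex of right $\calC^{(d)}_{\calX_\bullet}$-modules, not $\calC^{(d-1)}$-modules, so the inductive hypothesis does not apply directly, and the ``filtration by outermost direction-$d$ cuts'' that is meant to bridge this is not defined, let alone shown to have the required associated graded. Your homotopy route~(b) is closer to what the paper does, but you yourself identify the obstruction: picking the \emph{minimal} admissible direction $k$ is not stable under $\partial$, because applying $\partial$ after your homotopy can expose a cut-through direction of smaller index, so the usual $\partial h+h\partial=\id$ computation breaks.

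The paper resolves precisely this ``delicate point'' by refusing to make a choice. Its homotopy $\mathbf{h}$ sums over \emph{all} cut-through directions simultaneously (promoting each available outermost white layer to black), and then a direct de~Rham/Koszul style calculation gives
\[
(\mathbf{d}\mathbf{h}+\mathbf{h}\mathbf{d})(C)=\bigl(n_b(c)+n_w(c)\bigr)\,C,
\]
where $n_b$ counts black cuts and $n_w$ counts cut-through directions. The subcollections with fixed $n_b+n_w=p$ are closed under both $\mathbf{d}$ and $\mathbf{h}$, so $\tfrac{1}{p}\mathbf{h}$ is a genuine contracting homotopy for $p>0$, while $p=0$ forces both $n_b=0$ and $n_w=0$, which happens only in arity~$1$ and recovers~$\mathbb{I}$. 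So the missing idea in your proposal is: do not choose a direction, average over them all, and use the Euler identity.
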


\begin{proof}
Let us denote, for brevity, 
\begin{equation}\label{eq:Homology}
\calH^{(d)}_{\calX_\bullet}=(\mathbb{I}\oplus\k\calX_1)\Box\cdots\Box(\mathbb{I}\oplus\k\calX_d) .
\end{equation}

We shall place the collection $\calH^{(d)}_{\calX_\bullet}$ in the same context as the $d$-dimensional cut operad. Namely, for each term $\k\calX_{i_1}\Box\cdots\Box\k\calX_{i_s}$ with $i_1<\cdots<i_s$ obtained by expanding the product \eqref{eq:Homology}, we choose a basis of elements
 \[
w_1\otimes\cdots\otimes w_s, \quad w_j\in\calX_{i_j}(\pi^{(j)}) ,
 \]
and associate with such element the $\calX$-subdivision of the unit cube into $n_1=|\pi^{(1)}|$ parts with hyperplanes parallel to $\{x_{i_1}=0\}$, then subdivision of each of the parts thus obtained into $n_2=|\pi^{(2)}|$ parts with hyperplanes parallel to $\{x_{i_2}=0\}$, etc. We label the $j$-th cut by $w_j$, and also label the interiors of the $d$-dimensional rectangles thus obtained using the orthogonal partitions: for $1\le m_1\le n_1$, \ldots $1\le m_s\le n_s$, the $(m_1,\ldots,m_s)$-rectangle obtains the label which is the only element of $\pi^{(1)}_{m_1}\cap\cdots\cap\pi^{(s)}_{m_s}$.  

The collection $\calH^{(d)}_{\calX_\bullet}\circ\calC^{(d)}_{\calX_\bullet}$ can now be viewed as follows. Its basis elements are indexed by ${\calX_\bullet}$-subdivisions of the unit cube, where we take
 a ``full'' subdivision from $\calH^{(d)}_{\calX_\bullet}$, and then insert inside each of its boxes a ${\calX_\bullet}$-subdivision of the unit $d$-cube. To make the distinction between two types of cuts clear, we shall refer to cuts coming from $\calH^{(d)}_{\calX_\bullet}$ as black cuts, and the cuts coming from $\calC^{(d)}_{\calX_\bullet}$ as white cuts, so that the ${\calX_\bullet}$-subdivisions we use are now two-coloured. 

Suppose that $c$ is a basis element of $\calH^{(d)}_{\calX_\bullet}\circ\calC^{(d)}_{\calX_\bullet}$. We shall call, for $i=1,\ldots,k$, the hyperplane $\alpha_i=\{x_i=0\}$ a \emph{cut-through direction} for $c$ if there exists an integer $n_i\ge 2$ and an element $v\in\calX_i(\underline{n_i})$ for which the hyperplane pieces parallel to $\alpha_i$  which cut the unit cube into $n_i$ equal parts are fully covered by cuts of $c$, and all the points of those cuts are labelled by the element $v$.

We now define a structure of a chain complex on $\calH^{(d)}_{\calX_\bullet}\circ\calC^{(d)}_{\calX_\bullet}$. For that, it is convenient to assign to a two-coloured ${\calX_\bullet}$-subdivision $c$ a basis element  
 \[
C=(x_{v_1,i_1}x_{v_2,i_2}\cdot\cdots\cdot x_{v_r,i_r}\otimes \xi_{w_1,j_1}\wedge\cdots\wedge \xi_{w_s,j_s})c
 \]
of $\calH^{(d)}_{\calX_\bullet}\circ\calC^{(k)}_{\calX_\bullet}$, where $\alpha_{i_1}$, \ldots, $\alpha_{i_r}$ are the cut-through directions for $c$ with the respective labels $v_1$, \ldots, $v_r$, and $\alpha_{j_1}$, \ldots, $\alpha_{j_s}$ are the black cuts of $c$ with the respective labels $w_1$, \ldots, $w_s$. Here $x_{v,i}$, $v\in\calX_i$, are formal commuting variables, and $\xi_{w,j}$, $w\in\calX_j$, are formal anti-commuting variables. 

We define a linear map $\mathbf{d}$ of homological degree $-1$ on $\calH^{(d)}_{\calX_\bullet}\circ\calC^{(d)}_{\calX_\bullet}$ as follows. For a basis element $C$
as above, we put 
 \[
\mathbf{d}(C)=\sum_{p=1}^s (-1)^{p-1} (x_{v_1,i_1}x_{v_2,i_2}\cdot\cdots\cdot x_{v_r,i_r}x_{w_p,j_p}\otimes \xi_{w_1,j_1}\wedge\cdots\wedge \hat{\xi}_{w_p,j_p}\wedge\cdots\wedge \xi_{w_s,j_s})c^{(p)} ,
 \]
where $c^{(p)}$ is the ${\calX_\bullet}$-subdivision for which the colour of the black cuts in the direction of the hyperplane $\alpha_{j_p}$ is changed from black to white. By a direct computation, $d^2=0$, so $\calH^{(d)}_{\calX_\bullet}\circ\calC^{(d)}_{\calX_\bullet}$ acquires a chain complex structure. 

We also define a linear map $\mathbf{h}$ of homological degree $1$ on $\calH^{(d)}_{\calX_\bullet}\circ\calC^{(d)}_{\calX_\bullet}$ as follows. For a basis element $C$ as above, we put 
 \[
\mathbf{h}(C)=\sum_{q=1}^r (x_{v_1,i_1}x_{v_2,i_2}\cdot\cdots\cdot \hat{x}_{v_q,i_q}\cdot\cdots\cdot x_{v_r,i_r}\otimes \xi_{v_q,i_q}\wedge\xi_{w_1,j_1}\wedge\cdots\wedge \xi_{w_s,j_s})c_{(q)} ,
 \]
where $c_{(q)}$ is the ${\calX_\bullet}$-subdivision for which the colour of the hyperplance in the $q$-th cut-through direction for $c$ is changed from white to black. 
By a direct computation,
 \[
(\mathbf{d}\mathbf{h}+\mathbf{h}\mathbf{d})(C)=(n_b(c)+n_w(c))C , 
 \]
where $n_b(c)$ is the number of the black cuts in $c$ and $n_w(c)$ is the number of cut-through directions for $c$; in fact, the formulas for the differential and the map $\mathbf{h}$ are designed in such a way that they mimic the classical Koszul complex (the polynomial de Rham complex). Note that for every $p$ the subcollection $\left(\calH^{(d)}_{\calX_\bullet}\circ\calC^{(d)}_{\calX_\bullet}\right)_p$ spanned by all basis elements for which $n_b(c)+n_w(c)=p$ is closed under both $\mathbf{d}$ and $\mathbf{h}$. For $n>0$, let us define a map $\mathbf{h}'$ on $\left(\calH^{(d)}_{\calX_\bullet}\circ\calC^{(d)}_{\calX_\bullet}\right)_p$ by the formula $\mathbf{h}'=\frac1p \mathbf{h}$. Clearly, $\mathbf{d}\mathbf{h}'+\mathbf{h}'\mathbf{d}=\id$, and hence the chain complex $\left(\calH^{(d)}_{\calX_\bullet}\circ\calC^{(d)}_{\calX_\bullet}\right)_p$ is acyclic. Also, we have $\left(\calH^{(d)}_{\calX_\bullet}\circ\calC^{(d)}_{\calX_\bullet}\right)_0\cong\mathbb{I}$, as for all non-unary elements there is either at least one black cut, or at least one cut-through direction (or both). 

Finally, it is obvious that this resolution is minimal, as the differential creates at least one white cut, thus landing in the augmentation ideal.
\end{proof}

\subsection{Faithfulness of the combinatorial representation of interchange}

We are finally able to establish that the cut operad represents the Boardman--Vogt tensor product faithfully. 

\begin{theorem}\label{th:Iso}
We have 
 \[
\calC^{(d)}_{\calX_\bullet}\cong \calT(\calX_1)\otimes\cdots\otimes\calT(\calX_d) .
 \]
\end{theorem}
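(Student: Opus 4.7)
The approach is to show that the surjective operad morphism
\[
\pi: \calT(\calX_1)\otimes\cdots\otimes\calT(\calX_d)\twoheadrightarrow\calC^{(d)}_{\calX_\bullet}
\]
established in the Corollary at the end of Section \ref{sec:subdiv} is in fact injective. Since $\pi$ is $\mathbb{S}$-equivariant in each arity, it suffices to check that source and target have the same $\mathbb{S}$-module dimensions.

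The first ingredient is to extract the Hilbert series of $\calC^{(d)}_{\calX_\bullet}$ from Lemma \ref{lm:CutResol}. Taking Euler characteristic of the minimal free right module resolution, in the Grothendieck group of $\mathbb{S}$-modules, yields an identity of the shape
\[
[\mathbb{I}] = \Big(\big([\mathbb{I}]-[\k\calX_1]\big)\Box\cdots\Box\big([\mathbb{I}]-[\k\calX_d]\big)\Big)\circ[\calC^{(d)}_{\calX_\bullet}],
\]
and this, together with $\calC^{(d)}_{\calX_\bullet}(\underline{1})=\mathbbold{1}$, determines $[\calC^{(d)}_{\calX_\bullet}]$ uniquely. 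Specialising to the binary, one-generator case reproduces the functional equation $\sum_{k=0}^d\binom{d}{k}f_d(t)^{2^k}=t$ displayed earlier.

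The second, harder ingredient is an upper bound of the same shape on the Hilbert series of the Boardman--Vogt tensor product. The plan is to exploit the presentation supplied by Proposition \ref{prop:InterchangeGen} by means of an operadic rewriting system: fix a total order on the directions $1,\dots,d$ and orient the interchange relations so that in any composition tree, operations from $\calX_k$ are pushed above operations from $\calX_l$ whenever $k<l$. A reduced tree then encodes the canonical reconstruction of an $\calX_\bullet$-subdivision in which at every stage one performs the lowest-indexed available bisection first, so the resulting spanning set of the BV tensor product has cardinality in each arity at most $\dim\calC^{(d)}_{\calX_\bullet}(\underline{n})$.

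Combining the surjection $\pi$ with equality of Hilbert series forces $\pi$ to be an isomorphism, proving the theorem. The main obstacle is the second step: verifying termination and confluence of the rewriting system, or equivalently, that the oriented interchange relations form a Gröbner basis for the defining ideal of the BV tensor product. A possible alternative to bypass this, which I would also explore, is to build a section of $\pi$ by hand via the geometry: read off any outermost full bisection in a given $\calX_\bullet$-subdivision, decompose recursively, and invoke the interchange laws of Proposition \ref{prop:InterchangeGen} to show the resulting element of the BV tensor product does not depend on the choice of outermost direction. Either way, once Theorem \ref{th:Iso} is in hand, substituting $\calT(\calX_1)\otimes\cdots\otimes\calT(\calX_d)$ for $\calC^{(d)}_{\calX_\bullet}$ in Lemma \ref{lm:CutResol} immediately yields the main theorem of the introduction.
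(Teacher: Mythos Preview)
Your overall strategy---use Lemma~\ref{lm:CutResol} to pin down the size of $\calC^{(d)}_{\calX_\bullet}$, then bound the Boardman--Vogt tensor product from above---is sound in principle, but you have correctly identified that the second step is where all the work lies, and you have not actually carried it out. Both alternatives you sketch (a confluent rewriting system, or a hand-built section of~$\pi$) are real projects in their own right; neither is obviously easier than the theorem itself.

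The paper's proof bypasses this difficulty entirely by extracting more information from Lemma~\ref{lm:CutResol} than just the Euler characteristic. The point is a general fact about minimal free resolutions of the augmentation module of a reduced connected operad: the generators in homological degree~$1$ form a minimal generating set for the operad, and the generators in homological degree~$2$ form a minimal set of defining relations. In the resolution of Lemma~\ref{lm:CutResol}, the degree-$2$ piece of $\calH^{(d)}_{\calX_\bullet}$ is exactly $\bigoplus_{k<l}\k\calX_k\Box\k\calX_l$, and the differential sends such a generator to the difference of the two ways of realising the corresponding full two-directional cut---that is, to the interchange law between the two generators involved. Hence the interchange laws already generate \emph{all} relations in $\calC^{(d)}_{\calX_\bullet}$, and Proposition~\ref{prop:InterchangeGen} finishes the proof. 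No rewriting, no dimension count, no section.

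In short: you used Lemma~\ref{lm:CutResol} only numerically (via Euler characteristics), whereas the paper uses it structurally (via the standard interpretation of low-degree syzygies). That single observation replaces your entire ``second ingredient.'' Your concluding remark---that once Theorem~\ref{th:Iso} is established one substitutes back into Lemma~\ref{lm:CutResol} to obtain the main theorem---is exactly what the paper does.
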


\begin{proof}
Let us move to the linear context, and replace the set operads $\calC^{(d)}_{\calX_\bullet}$ and $\calT(\calX_1)\otimes\cdots\otimes\calT(\calX_d)$ by their linearisations (keeping the same notation). From Lemma \ref{lm:CutResol}, we know that 
 \[
\left((\mathbb{I}\oplus\calX_1)\Box\cdots\Box(\mathbb{I}\oplus\calX_d)\right)\circ\calC^{(d)}_{\calX_\bullet}
 \]
is a minimal resolution of $\mathbb{I}$, the augmentation module for $\calC^{(d)}_{\calX_\bullet}$ by free right $\calC^{(d)}_{\calX_\bullet}$-modules.  It is well known that minimal resolutions of modules are defined uniquely up to an isomorphism, and that for a reduced connected $\k$-linear operad $\calO$ the generators of a minimal resolution of the augmentation $\calO$-module $\mathbb{I}$ in low homological degrees have easy interpretations in terms of that operad: the generators of homological degree $1$ correspond to the minimal set of generators $\calY$ for $\calO$ and the generators of homological degree $2$ correspond to the minimal set of relations (minimal set of generators of the kernel of the surjection $\calT(\calY)\twoheadrightarrow\calO$). In our case, elements of homological degree $1$ are indexed by choices of direction of [simultaneous] black cuts, and a label for such cut, which is not surprising: as we know, the operad $\calC^{(d)}_{\calX_\bullet}$ is generated by $\calX_\bullet$. Elements of homological degree $2$ are indexed by choices of two directions of [simultaneous] black cuts and their labels, say $p$ and $q$. The differential of such an element is the difference of two elements where the  simultaneous black cuts in one of the two directions are made white. Such an element encodes a relation in the operad: its differential is the difference of two equal elements where all the black cuts are made white; thus such an element represents the corresponding interchange law between $p$ and $q$. Thus, all relations of  $\calC^{(d)}_{\calX_\bullet}$ follow from interchange laws between the generating operations. We now refer to the presentations of Boardman--Vogt tensor products given by Proposition~\ref{prop:InterchangeGen} to complete the proof.
\end{proof}

The following result shows that, by contrast with Propositions \ref{prop:Kock} and \ref{prop:MurraySara}, no unexpected symmetries arise for interchanging absolutely free structures. 

\begin{corollary}
The underlying $S_n$-module of $(\calT(\calX_1)\otimes\cdots\otimes\calT(\calX_d))(n)$ is free. 
\end{corollary}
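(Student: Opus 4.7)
The plan is to transport freeness from the explicit combinatorial model established earlier in the paper. First I would invoke Theorem \ref{th:Iso} to identify $\calT(\calX_1)\otimes\cdots\otimes\calT(\calX_d)$ with the (linearised) cut operad $\calC^{(d)}_{\calX_\bullet}$; after this reduction it suffices to produce, for each $n$, an $S_n$-basis of $\calC^{(d)}_{\calX_\bullet}(\underline{n})$.

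Next I would return to the set-theoretic description in Definition \ref{def:CutOperad2}: a basis element is an $\calX_\bullet$-subdivision of $[0,1]^d$ together with a bijection between its $n$ top-dimensional regions and the label set $\{1,\ldots,n\}$, and the $S_n$-action is by post-composition with that bijection. Since the $n$ regions of a given subdivision are pairwise distinct subsets of $[0,1]^d$ (they have pairwise disjoint interiors and occupy different positions in the cube), the stabiliser of any labelling is trivial, so the collection of labelled subdivisions partitions into free $S_n$-orbits. Linearising this free $S_n$-set orbit-by-orbit immediately yields a free $S_n$-module, which is precisely the claim.

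I do not foresee a serious obstacle here: the substance of the corollary is already carried by Theorem \ref{th:Iso}, and once the isomorphism is in place the freeness statement reduces to the elementary geometric observation that the pieces of a subdivision are mutually distinguishable. The only point worth verifying carefully is that the passage from the set-level cut operad to its $\k$-linearisation preserves freeness of the $\mathbb{S}$-action, but this is automatic: the $\k$-span of a free $S_n$-set is a direct sum of copies of the regular representation $\k[S_n]$, one for each $S_n$-orbit of labelled subdivisions.
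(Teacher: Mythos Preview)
Your proposal is correct and follows the same route as the paper: invoke Theorem~\ref{th:Iso} to replace the Boardman--Vogt tensor product by the cut operad, and then observe that $\calC^{(d)}_{\calX_\bullet}(\underline{n})$ is a free $S_n$-module because all labellings of the rectangles are permitted. Your write-up simply spells out in more detail (distinguishability of the regions, trivial stabilisers, linearisation of a free $S_n$-set) what the paper condenses into a single sentence.
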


\begin{proof}
This follows from the trivial observation that the underlying $S_n$-module of $\calC^{(d)}_{\calX_\bullet}(n)$ is free, since all possible labelling of rectangles are allowed.
\end{proof}

\begin{remark}
Let us mention an application of Theorem \ref{th:Iso} to a more ``classical'' question stated in terms of varieties of algebras. It follows immediately from that theorem 
that in the variety of nonassociative algebras defined by $d$ binary operations
$\star_1, \dots, \star_d$ with no symmetry satisfying the $d(d-1)/2$ interchange laws,
any free algebra generated by a set $X$ has a "monomial basis" consisting of all subdivisions of the unit $d$-cube into 
smaller $d$-rectangles with disjoint interiors by iterated bisections orthogonal to coordinate axes with additional labelling:
each of those $d$-rectangles should be given a label from $X$.  The multiplication of these ``labelled subdivisions'' may then
be defined geometrically as follows:  If $p$ and $q$ are labelled subdivisions, then for
$1 \le i \le d$, the product $p \star_i q$ is the labelled subdivision $\tfrac12 \big( p \cup ( e_i + q ) \big)$,
where $e_i$ is the unit vector in the $i$th direction.
\end{remark}

Combining all the results we proved, we can now establish the key conceptual result of this paper.

\begin{theorem}
\label{maintheorem}
Let $\calT(\calX_1)$, \ldots, $\calT(\calX_d)$ be reduced connected absolutely free set operads. There exists a minimal resolution 
 \[
\big((\mathbb{I}\oplus\k\calX_1)\Box\cdots\Box(\mathbb{I}\oplus\k\calX_d)\big)\circ\big(\calT(\calX_1)\otimes\cdots\otimes\calT(\calX_d)\big)
 \]
of the augmentation module $\mathbb{I}$ over (the linearised version of) the $d$-fold Boardman--Vogt tensor product $\calT(\calX_1)\otimes\cdots\otimes\calT(\calX_d)$ by free right modules. Here the homological degree of all factors $\mathbb{I}$ is equal to zero, and the homological degree of $\calX_k$ is equal to $1$ for all $1\le k\le d$.
\end{theorem}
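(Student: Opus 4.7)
The plan is to deduce the theorem as an immediate consequence of two results already established earlier in the text. First I would invoke Lemma \ref{lm:CutResol}, which produces precisely the complex in the statement, but viewed as a resolution of the augmentation module $\mathbb{I}$ over the linearised cut operad $\calC^{(d)}_{\calX_\bullet}$, with the stated bidegrees (unit factors in degree $0$ and each $\k\calX_k$ in degree $1$) and with minimality already verified via the Koszul-style contracting homotopy $\mathbf{h}'$ constructed in the proof of that lemma.

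Next I would appeal to Theorem \ref{th:Iso}, which provides an isomorphism $\calC^{(d)}_{\calX_\bullet} \cong \calT(\calX_1)\otimes\cdots\otimes\calT(\calX_d)$ of linearised operads. The key routine observation is that any isomorphism of reduced connected operads $\calO \xrightarrow{\sim} \calO'$ induces an equivalence between the categories of right $\calO$-modules and right $\calO'$-modules which carries the augmentation module to the augmentation module, sends free modules to free modules, and preserves all homological invariants (in particular the minimality of a resolution, which can be detected by the property that the induced differential on the bar construction vanishes). Transporting the resolution of Lemma \ref{lm:CutResol} across this equivalence yields exactly the resolution claimed in the theorem.

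There is no real obstacle to overcome at this stage: all the conceptual and computational work has been packaged into Lemma \ref{lm:CutResol} (the combinatorics of two-coloured $\calX_\bullet$-subdivisions together with the Koszul-type contracting homotopy) and Theorem \ref{th:Iso} (the comparison of generators and relations between the two operads, read off from the low homological degree part of that very resolution). The content of the present theorem is thus the repackaging of those two results in a form that no longer mentions the auxiliary object $\calC^{(d)}_{\calX_\bullet}$, making the homological statement intrinsic to the Boardman--Vogt tensor product and setting up the corollaries (freeness as an $\mathbb{S}$-module, the functional equation for the generating function $f_d(t)$) as direct consequences of taking Euler characteristics and symmetric-group ranks.
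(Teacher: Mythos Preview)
Your proposal is correct and is essentially identical to the paper's own proof, which simply cites Theorem~\ref{th:Iso} to replace $\calC^{(d)}_{\calX_\bullet}$ by $\calT(\calX_1)\otimes\cdots\otimes\calT(\calX_d)$ in the statement of Lemma~\ref{lm:CutResol}. One small inaccuracy: the contracting homotopy $\mathbf{h}'$ in Lemma~\ref{lm:CutResol} establishes acyclicity, not minimality; minimality is checked separately there by observing that the differential always lands in the augmentation ideal.
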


\begin{proof}
By Theorem \ref{th:Iso}, we have $\calC^{(d)}_{\calX_\bullet}\cong \calT(\calX_1)\otimes\cdots\otimes\calT(\calX_d)$, so the operad $\calC^{(d)}_{\calX_\bullet}$ in Lemma \ref{lm:CutResol} can be replaced by 
 \[
\calT(\calX_1)\otimes\cdots\otimes\calT(\calX_d) ,
 \]
the $d$-fold Boardman--Vogt tensor product. 
\end{proof}

For completeness, we state the general version of the inclusion-exclusion functional equation discussed in the introduction. To that end,  we shall need the linear map $N$ from the algebra of Dirichlet series to the algebra of formal power series for which $N(n^{-s})=x^n$.

\begin{corollary}\label{cor:FuncEq}
Let $D^{(d)}_{{\calX_\bullet}}(s)$ be the Dirichlet generating function of Euler characteristics of the collection 
 \[
(\mathbb{I}\oplus\calX_1)\Box\cdots\Box(\mathbb{I}\oplus\calX_d) 
 \]
with the homological grading as described in Lemma \ref{lm:CutResol} and Theorem \ref{maintheorem}.
We have
 \[
D^{(d)}_{{\calX_\bullet}}(s)=\prod_{k=1}^d\left(1-\sum_{n\ge 2}\frac{\dim\calX_k(\underline{n})}{n! n^s}\right) .
 \]
Furthermore, the power series  
 \[
g^{(d)}_{\calX}(x)=N(D^{(d)}_{{\calX_\bullet}}(s)) 
 \]
is the compositional inverse of the generating function 
 \[
f^{(d)}_{\calX_\bullet}(x)=\sum_{n\ge 1}\frac{|\calT(\calX_1)\otimes\cdots\otimes\calT(\calX_d)(\underline{n})|}{n!}x^n .
 \]
\end{corollary}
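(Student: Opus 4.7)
The plan is to derive Corollary \ref{cor:FuncEq} in two stages: first establishing the product formula for $D^{(d)}_{\calX_\bullet}(s)$ from the combinatorics of the matrix product, and then using Theorem \ref{maintheorem} together with the generating-function calculus of symmetric collections to deduce the compositional-inverse relationship.

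For the first stage, I would verify directly that the matrix product $\Box$ categorifies the Dirichlet product of the ``normalised'' Dirichlet series $D_\calF(s) := \sum_n \dim\calF(\underline{n})/(n!\,n^s)$. The key combinatorial input is that for $n=kl$, the number of ordered pairs of orthogonal partitions of $\underline{n}$ with $k$ and $l$ parts respectively equals $n!/(k!\,l!)$, since such a pair is the same data as a bijection $\underline{n} \leftrightarrow \underline{k}\times\underline{l}$ modulo row- and column-label permutations. Substituting this count into the definition of $\Box$ yields $D_{\calX\Box\calY}(s) = D_\calX(s)\cdot D_\calY(s)$. Applying this inductively to $(\mathbb{I}\oplus\k\calX_1)\Box\cdots\Box(\mathbb{I}\oplus\k\calX_d)$, with each $\k\calX_k$ placed in homological degree $1$ so that its contribution to the Euler-characteristic Dirichlet series of the factor $\mathbb{I}\oplus\k\calX_k$ carries a minus sign, produces the asserted product formula for $D^{(d)}_{\calX_\bullet}(s)$.

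For the second stage, I would invoke two standard facts: composition $\circ$ of symmetric collections categorifies composition of exponential generating functions, i.e., $f_{\calP\circ\calQ}(x)=f_\calP(f_\calQ(x))$ when $\calQ$ is reduced; and Euler characteristic is invariant under quasi-isomorphism. By Theorem \ref{maintheorem}, the composition
 \[
\bigl((\mathbb{I}\oplus\k\calX_1)\Box\cdots\Box(\mathbb{I}\oplus\k\calX_d)\bigr)\circ\bigl(\calT(\calX_1)\otimes\cdots\otimes\calT(\calX_d)\bigr)
 \]
is a free resolution of $\mathbb{I}$, whose EGF is simply $x$. Observing that the map $N$ with $N(n^{-s})=x^n$ converts the normalised Dirichlet series $D^{(d)}_{\calX_\bullet}(s)$ into precisely the EGF of the Euler characteristic of the left matrix-product factor -- namely $g^{(d)}_{\calX_\bullet}(x)$ -- the EGF composition identity becomes $g^{(d)}_{\calX_\bullet}\bigl(f^{(d)}_{\calX_\bullet}(x)\bigr)=x$. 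Since $f^{(d)}_{\calX_\bullet}(0)=0$ and the coefficient of $x$ equals $1$ (each operad is reduced and connected), compositional inverses in $x\k[[x]]$ are unique, and hence $g^{(d)}_{\calX_\bullet}$ is the compositional inverse of $f^{(d)}_{\calX_\bullet}$.

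The only substantive calculation is the first one: counting orthogonal partition pairs and arranging the normalising factors so that $\Box$ really does correspond to Dirichlet multiplication of the series $D_\calF(s)$. Once this is in hand, the rest of the argument is purely formal, and all of the genuine mathematical content lives inside Theorem \ref{maintheorem}.
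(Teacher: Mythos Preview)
Your proposal is correct and follows essentially the same approach as the paper: the first formula comes from the fact that $\Box$ categorifies the Dirichlet product (you spell out the orthogonal-partition count, whereas the paper merely cites this fact), and the second follows by computing the Euler characteristic of the resolution in Theorem~\ref{maintheorem} in two ways, via composition of collections and via its homology~$\mathbb{I}$. Your write-up is simply a more detailed version of the paper's two-sentence proof.
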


\begin{proof}
The first statement follows from the fact that the operation $\Box$ categorifies the product of Dirichlet series. The second statement is obtained by computing the Euler characteristics of the complex
 \[
\left((\mathbb{I}\oplus\calX_1)\Box\cdots\Box(\mathbb{I}\oplus\calX_d)\right)\circ\left(\calT(\calX_1)\otimes\cdots\otimes\calT(\calX_d)\right)
 \]  
in two different ways, directly via the composition of collections and via the homology (which is $\mathbb{I}$ in degree zero and arity one, and zero otherwise). 
\end{proof}

\section{Concluding remarks}\label{sec:conclusion}

The statement of Theorem \ref{maintheorem} is aesthetically appealing, and, if we note that
 \[
\big(\calX\circ\calT(\calX)\to\calT(\calX)\big)\cong \big(\mathbb{I}\oplus\k\calX\big)\circ\calT(\calX)
 \]
is the minimal resolution of the augmentation $\calT(\calX)$-module $\mathbb{I}$ by free right modules, admits the following obvious generalisation to arbitrary set operads.

\begin{definition}
Let $\calP_1$, \ldots, $\calP_d$ be reduced connected set operads, and suppose that $\calV_k\circ\calP_k$ is the underlying $\mathbb{S}$-module of the minimal resolution of the augmentation module $\mathbb{I}$ over (the linearised version of) $\calP_k$ by right modules. 
We say that this $d$-tuple of operads \emph{has $\Box$-multiplicative homology} if there exists a minimal resolution 
 \[
(\calV_1\Box\cdots\Box\calV_d)\circ(\calP\otimes\cdots\otimes\calP_d)
 \]
of the augmentation module $\mathbb{I}$ over (the linearised version of) the $d$-fold Boardman--Vogt tensor product $\calP_1\otimes\cdots\otimes\calP_d$ by free right modules. 
\end{definition}

In this section, we discuss intuition behind this property, present two examples showing that it should not be expected to hold in general, and make a conjecture generalising Theorem \ref{maintheorem} to a slightly wider class of examples. 

\smallskip 

Let us first offer some intuition behind $\Box$-multiplicativity. For that, let us consider the right module version of the Boardman--Vogt tensor product $\tilde{\otimes}$ (obtained from the bimodule tensor product introduced by Dwyer and Hess \cite{DH2014}), which satisfies the following two properties crucial for us.

\begin{proposition}\label{prop:DHK}
Assume that we consider operads and modules in simplicial sets. 

\noindent 
1 (Dwyer and Hess \cite[Th.~1.14]{DH2014}). For any $\mathbb{S}$-modules $\calV_1$, \ldots, $\calV_d$ we have
 \[
(\calV_1\circ\calP_1)\tilde{\otimes}\cdots\tilde{\otimes}(\calV_d\circ\calP_d)\cong(\calV_1\Box\cdots\Box\calV_d)\circ(\calP\otimes\cdots\otimes\calP_d) . 
 \]

\noindent 
2 (Dwyer, Hess, and Knudsen \cite{Hess2017}). Let $\calP$ and $\calQ$ be operads, and let $\calG$ be a right $\calQ$-module. If $\calG$ is cofibrant in the projective model structure, then the functor $-\otimes\calG$ (from right $\calP$-modules to right $\calP\otimes\calQ$-modules) is a left Quillen functor.
\end{proposition}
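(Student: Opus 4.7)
The plan is to handle the two parts of Proposition \ref{prop:DHK} separately, since they concern different structural aspects of the right-module Boardman--Vogt tensor product $\tilde{\otimes}$.

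For Part 1, I would argue by Yoneda on both sides. The right-module BV tensor product $\tilde{\otimes}$ is constructed precisely so that giving a map $M_1 \tilde{\otimes} \cdots \tilde{\otimes} M_d \to L$ into a right $\calP_1 \otimes \cdots \otimes \calP_d$-module $L$ is equivalent to giving $d$ compatible right-module maps $M_i \to L$ whose induced actions of the $\calP_i$ on $L$ pairwise interchange. For a free right module $\calV_i \circ \calP_i$, the free-forgetful adjunction identifies module maps out of $\calV_i \circ \calP_i$ with bare $\mathbb{S}$-module maps out of $\calV_i$. Hence maps out of the left-hand side classify $d$-tuples of $\mathbb{S}$-module maps $\calV_i \to L$ whose induced $\calP_i$-actions on $L$ interchange. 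On the other hand, the matrix product $\Box$ is built precisely to encode the orthogonal-partition combinatorics of interchange, so by its universal property such data amounts to a single $\mathbb{S}$-module map $\calV_1 \Box \cdots \Box \calV_d \to L$, equivalently a right-module map $(\calV_1 \Box \cdots \Box \calV_d) \circ (\calP_1 \otimes \cdots \otimes \calP_d) \to L$. Both sides of the claimed isomorphism therefore corepresent the same functor. An explicit candidate morphism realising the isomorphism can be built by iterating the natural transformation $\sigma \colon (\calV \circ \calW) \Box (\calY \circ \calZ) \to (\calV \Box \calY) \circ (\calW \Box \calZ)$ recalled in Section \ref{sec:recoll}, and the Yoneda argument then shows it is an isomorphism.

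For Part 2, I would first obtain a right adjoint to $-\tilde{\otimes}\calG$ by abstract nonsense: the functor is cocontinuous in its module argument (it is defined by a colimit in each arity), and right $\calP$-modules in simplicial sets form a locally presentable category, so the adjoint functor theorem produces a right adjoint. To establish the left Quillen property in the projective model structure, I would verify the pushout-product axiom for the bifunctor $\tilde{\otimes}$: given cofibrations $i\colon M \to M'$ of right $\calP$-modules and $j\colon G \to G'$ of right $\calQ$-modules, the induced map
\[
M' \tilde{\otimes} G \sqcup_{M \tilde{\otimes} G} M \tilde{\otimes} G' \longrightarrow M' \tilde{\otimes} G'
\]
is a cofibration, and is trivial when either $i$ or $j$ is. Specialising to the cofibration $j\colon \varnothing \to \calG$, available precisely because $\calG$ is assumed cofibrant, then yields the claim.

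The hard part will be the pushout-product verification in Part 2. The projective model structure on right modules over an operad is combinatorially delicate, and $\tilde{\otimes}$ is built from iterated tree-grafting modulo interchange relations. A typical attack decomposes cellular maps as transfinite compositions of pushouts of generating cofibrations and then produces a compatible filtration on the pushout-product, carefully tracking how the interchange relations interact with each cell attachment. Controlling these relations across the different layers of the filtration is the technical heart of the matter and is the reason the authors invoke separate work in progress of Dwyer, Hess, and Knudsen for this input rather than proving it from scratch.
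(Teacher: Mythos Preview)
The paper provides no proof of this proposition at all. Both parts are stated as external results: Part~1 is cited verbatim as \cite[Th.~1.14]{DH2014}, and Part~2 is attributed to unpublished work of Dwyer, Hess, and Knudsen via a private communication \cite{Hess2017}. The proposition appears only in the concluding Section~\ref{sec:conclusion}, where it is used purely as heuristic motivation for the notion of $\Box$-multiplicative homology; the paper even emphasises that the argument it sketches using this proposition is ``only good as an intuitive explanation'' because the result is not available in the $\k$-linear setting. So there is no paper proof to compare your proposal against.

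As for your sketch itself: your outline for Part~2 (adjoint functor theorem plus a pushout-product verification reduced to generating cofibrations) is the standard shape such arguments take, and you correctly identify the filtration/interchange bookkeeping as the hard technical content that the paper outsources. For Part~1, however, your Yoneda argument has a genuine gap. You assert that ``by its universal property'' a $d$-tuple of $\mathbb{S}$-module maps $\calV_i\to L$ with interchanging induced $\calP_i$-actions amounts to a single $\mathbb{S}$-module map $\calV_1\Box\cdots\Box\calV_d\to L$. The matrix product $\Box$ has no such universal property: a map out of $\calV_1\Box\calV_2$ is data indexed by pairs of orthogonal partitions, not merely a pair of maps out of $\calV_1$ and $\calV_2$ separately. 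What actually happens in \cite{DH2014} is a direct combinatorial analysis of the quotient defining $\tilde{\otimes}$ on free modules, showing that after imposing the interchange relations the surviving basis elements are exactly indexed by the orthogonal-partition data that defines $\Box$. The natural transformation $\sigma$ you mention is the right comparison map, but proving it is an isomorphism in this case requires that concrete identification, not an abstract representability argument.
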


Let us try to proceed, for the sake of the argument, as if these results were available in the $\k$-linear context. We consider, for each $1\le k\le d$, the dg module $\calV_k\circ\calP_k$ which is the minimal resolution of the augmentation $\calP_k$-module $\mathbb{I}$ by free right modules.  By a result of Fresse \cite[Prop.~14.2.2]{Fresse2009}, a minimal resolution is cofibrant whenever $\calV_k$ and $\calP_k$ are cofibrant as $\mathbb{S}$-modules. Thus, under this extra assumption it would follow from the left Quillen property that 
 \[
(\calV_1\circ\calP_1)\tilde{\otimes}\cdots\tilde{\otimes}(\calV_d\circ\calP_d)\cong(\calV_1\Box\cdots\Box\calV_d)\circ(\calP\otimes\cdots\otimes\calP_d) 
 \]
is quasi-isomorphic to $\mathbb{I}$, so the $d$-tuple of operads $\calP_1$, \ldots, $\calP_d$ have $\Box$-multiplicative homology.  

\smallskip 

There is, however, a big problem with this argument (and hence it is only good as an intuitive explanation of $\Box$-multiplicativity): Proposition \ref{prop:DHK} is not available in the linear setting, and there is nothing on the level of simplicial sets for us to linearise: for operads in simplicial sets there is no notion of augmentation. In fact, the following example shows that cofibrancy as $\mathbb{S}$-modules is certainly not enough.

\begin{example}\label{ex:AssAss}
Consider the symmetric operad $\Ass$ of non-unital associative algebras. Note that the underlying $\mathbb{S}$-module of $\Ass$ is free, and that we have the Koszul (minimal) resolution $\Ass^{\ac}\circ\Ass$ of the augmentation module. However, it is clear that the minimal resolution of the augmentation module for $\Ass\otimes\Ass$ cannot be of the form 
 \[
\big(\Ass^{\ac}\Box\Ass^{\ac}\big)\circ\big(\Ass\otimes\Ass\big) , 
 \]
as computing Euler characteristics would have immediately implied freeness of the underlying $\mathbb{S}$-module of $\Ass\otimes\Ass$, contradicting Propositions \ref{prop:Kock} and \ref{prop:MurraySara}. Thus, $\Box$-multiplicativity of homology fails in this case. 
\end{example}

The following example of failure of $\Box$-multiplicativity for homology is less surprising, since the corresponding operads are not $\Sigma$-cofibrant on the level of sets. 

\begin{example}\label{ex:FreeFree}
Let us take $\calT(\calX_1)\cong\calT(\calX_2)$ to be the free operad generated by one \emph{commutative} binary operation. We have the minimal resolutions 
 \[
\big(\mathbb{I}\oplus\k\calX_1\big)\circ\calT(\calX_1) \quad \text{ and } \quad \big(\mathbb{I}\oplus\k\calX_2\big)\circ\calT(\calX_2)
 \]
for the respective augmentation modules, but the minimal resolution of the augmentation module for $\calT(\calX_1)\otimes\calT(\calX_2)$ cannot be of the form
 \[
\big((\mathbb{I}\oplus\calX_1)\Box(\mathbb{I}\oplus\calX_2)\big)\circ\big(\calT(\calX_1)\otimes\calT(\calX_2)\big) ,
 \]
since by a direct computation the space $\big((\mathbb{I}\oplus\calX_1)\Box(\mathbb{I}\oplus\calX_2)\big)(\underline{4})$ is six-dimensional, and the space of generators of the minimal resolution in arity $4$ is five-dimensional. Thus, $\Box$-multiplicativity of homology fails in this case as well. 
\end{example}

We conclude with a conjecture that slightly strengthens Theorem \ref{maintheorem}.

\begin{conjecture}\label{conj:TensorWithFree}
Suppose that the reduced connected set operad $\calO$ is free as an $\mathbb{S}$-module, and that $\calF=\calT(\calX)$ is a reduced connected absolutely free set operad.
The pair of operads $\calO$ and $\calF$ has $\Box$-multiplicative homology.
\end{conjecture}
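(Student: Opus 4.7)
The plan is to adapt the geometric and homological argument of Lemma \ref{lm:CutResol} and Theorem \ref{maintheorem} to the case where one of the free factors is replaced by the arbitrary $\mathbb{S}$-free operad $\calO$. Fix an $\mathbb{S}$-basis $B$ of $\calO$. Using Proposition \ref{prop:InterchangeGen} together with the interchange relations between generators of $\calO$ and of $\calX$, I would first establish a normal form for elements of $\calO \otimes \calF$ in the spirit of the cut operad of Section \ref{sec:subdiv}: every arity-$n$ element is a linear combination of $\calX$-subdivisions of the unit interval whose $m$ subintervals are decorated by elements $b_1,\ldots,b_m \in B$ of arities summing to $n$, together with a labelling of the resulting leaves. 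The existence of such a normal form expresses the idea that interchange allows all $\calF$-operations to be pushed to the outer ``cut'' layer, while $B$ being a basis of $\calO$ means no further relations are imposed on the inner decorations.

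Second, I would equip the $\mathbb{S}$-module $\bigl(\calV_\calO \Box (\mathbb{I} \oplus \k\calX)\bigr) \circ (\calO \otimes \calF)$ with a total differential combining (a) the internal differential of $\calV_\calO$ applied locally at each decorated subinterval, and (b) the ``turn a black $\calX$-cut white'' differential of Lemma \ref{lm:CutResol}. After checking $\mathbf{d}^2 = 0$ by a direct sign computation, acyclicity in positive degrees should follow from a spectral sequence for the filtration by the number of white $\calX$-cuts. On the $E_0$-page only the internal $\calV_\calO$-differential remains; on each subinterval it computes the minimal resolution of $\mathbb{I}$ over $\calO$ and therefore has homology $\mathbb{I}$ concentrated in degree zero. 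The $E_1$-page reduces precisely to the $d=1$ instance of the Koszul-type complex of Lemma \ref{lm:CutResol}, whose acyclicity is already known. Minimality is automatic: the $\calV_\calO$-differential lands in the augmentation ideal since $\calV_\calO \circ \calO$ is minimal, and the cut-recolouring differential always creates a new white cut.

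The main obstacle is twofold. First, writing out the sign and shuffle conventions that make the total differential square to zero is delicate because the $\calV_\calO$-differential acts locally on each piece of the $\calX$-subdivision, while the cut-recolouring differential modifies the subdivision itself; the two must be made compatible. Second, the $E_0$-collapse requires knowing that applying the resolution of $\calO$ piecewise still computes $\mathbb{I}$ after inserting into the cut combinatorics, which in turn exploits $\mathbb{S}$-freeness of $\calO$ to avoid the failures of exactness that produce the non-free phenomena of Propositions \ref{prop:Kock} and \ref{prop:MurraySara}. A cleaner route, if available, would be to establish a $\k$-linear analogue of the Dwyer--Hess--Knudsen isomorphism of Proposition \ref{prop:DHK},
\[
(\calV_\calO \circ \calO) \,\tilde{\otimes}\, \bigl((\mathbb{I}\oplus\k\calX)\circ\calF\bigr) \cong \bigl(\calV_\calO \Box (\mathbb{I}\oplus\k\calX)\bigr) \circ (\calO\otimes\calF),
\]
under cofibrancy hypotheses satisfied when $\calF$ is absolutely free and $\calO$ is $\mathbb{S}$-free; the conjecture would then be a formal consequence, while the failure of these hypotheses in Examples \ref{ex:AssAss} and \ref{ex:FreeFree} explains why $\Box$-multiplicativity cannot be expected more generally.
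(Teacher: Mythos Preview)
The statement you are attempting to prove is labelled a \emph{conjecture} in the paper; there is no proof in the paper to compare against. Your proposal is therefore an attempt to settle an open problem, and it contains a concrete fatal gap at the very first step.

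Your proposed normal form for $\calO\otimes\calF$ --- $\calX$-subdivisions of the interval with leaves decorated by basis elements of $\calO$ --- is simply incorrect. It amounts to asserting that $\calO\otimes\calT(\calX)\cong\calT(\calX)\circ\calO$ as $\mathbb{S}$-modules, which already fails in the very case the paper handles. Take $\calO=\calT(\calX_1)$ and $\calF=\calT(\calX_2)$ each generated by one binary operation. The generating function of $\calT(\calX_2)\circ\calT(\calX_1)$ is $f(f(x))$ with $f(x)=x+x^2+2x^3+\cdots$, giving coefficient $6$ in arity~$3$; the generating function $g$ of the genuine tensor product satisfies $g-2g^2+g^4=x$ and has coefficient $8$ in arity~$3$. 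The discrepancy is visible by hand: an element such as $\gamma(x_1;\gamma(x_2;-,-),-)$ with $x_1\in\calX_1$, $x_2\in\calX_2$ cannot be rewritten with the $\calX_2$-operation on the outside, because interchange only applies when the \emph{same} element is plugged into every slot. Interchange does not ``push all $\calF$-operations to the outer layer''; it only does so for grid-like compositions, which is exactly why the paper's model is two-dimensional rather than a tree-over-tree composite.

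Because the normal form is wrong, the chain complex you propose is not the right object, and the spectral-sequence argument built on it does not get off the ground. Note also the logical structure of the paper's own argument in the absolutely free case: it proceeds in the \emph{opposite} order to yours. Lemma~\ref{lm:CutResol} first constructs the resolution for the geometrically defined cut operad (whose basis is known by fiat), and only afterwards does Theorem~\ref{th:Iso} extract the presentation, hence the normal form, from that resolution. Your plan assumes the normal form in order to build the resolution, which is circular even in outline. Any genuine attack on the conjecture would need either a replacement for the cut operad adapted to a non-free $\calO$, or the $\k$-linear Dwyer--Hess--Knudsen statement you allude to at the end --- and the paper explicitly flags that the latter is \emph{not} available and that naive cofibrancy hypotheses are insufficient (Example~\ref{ex:AssAss}, where $\Ass$ is $\mathbb{S}$-free yet $\Box$-multiplicativity fails).
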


\end{document}